\theoremstyle{definition}
 \newtheorem{definition}{Definition}[section]
\theoremstyle{plain}
 \newtheorem{proposition}[definition]{Proposition}
\theoremstyle{plain}
 \newtheorem{theorem}[definition]{Theorem}
 \newtheorem{claim}[definition]{Claim}
\theoremstyle{definition}
\theoremstyle{plain}
\theoremstyle{plain}
 \newtheorem{corollary}[definition]{Corollary}
\theoremstyle{remark}
 \newtheorem{remark}[definition]{Remark}
\theoremstyle{definition}
\theoremstyle{plain}
\newcommand{\Ext}{\mathrm{Ext}}
\newcommand{\End}{\mathrm{End}}
\newcommand{\Mat}{\mathrm{Mat}}
\newcommand{\Hom}{\mathrm{Hom}}
\newcommand{\SHom}{\underline{\mathrm{Hom}}}
\newcommand{\Ca}{\mathcal{C}}
\newcommand{\Fun}{\mathrm{F}}
\newcommand{\Def}{\mathrm{Def}}
\newcommand{\Sets}{\mathrm{Sets}}
\newcommand{\Z}{\mathbb{Z}}
\newcommand{\SEnd}{\underline{\End}}
\newcommand{\A}{\Lambda}
\newcommand{\surjection}{\twoheadrightarrow}
\newcommand{\injection}{\hookrightarrow}
\renewcommand{\k}{\Bbbk}
\renewcommand{\1}{\mathbbm{1}}
\newcommand{\invlim}{\varprojlim}
\title[Universal Deformation Rings of String Modules]{Universal deformation rings of string modules over certain class of self-injective special biserial algebras}
\author[Calder\'on-Henao]{Yohny Calder\'on-Henao}
\address{Instituto de Matem\'aticas, Universidad de Antioquia, Medell{\'\i}n, Antioquia, Colombia}
\email{yohny.calderon@udea.edu.co}
\author[Giraldo]{Hern\'an Giraldo}
\address{Instituto de Matem\'aticas, Universidad de Antioquia, Medell{\'\i}n, Antioquia, Colombia}
\email{hernan.giraldo@udea.edu.co}
\author[Rueda-Robayo]{Ricardo Rueda-Robayo}
\address{Instituto de Matem\'aticas, Universidad de Antioquia, Medell{\'\i}n, Antioquia, Colombia}
\email{ricardo.rueda@udea.edu.co}
\author[V\'elez-Marulanda]{Jos\'e A. V\'elez-Marulanda}
\address{Department of Mathematics, Valdosta State University,
2072 Nevins Hall, 1500 N. Patterson St, Valdosta, GA,  31698-0040}
\email{javelezmarulanda@valdosta.edu}
\keywords{Universal deformation rings \and self-injective algebras \and special biserial algebras \and stable endomorphism rings}
\thanks{The fourth author was supported by the Release Time for Research Scholarship of the Office of Academic Affairs and by the Faculty Research Seed Grant of the Office of Sponsored Programs \& Research Administration at the Valdosta State University. This research was also partly supported by CODI and Estrategia de Sostenibilidad 2016-2017 (Universidad de Antioquia), and COLCIENCIAS-ECOPETROL (Contrato RC. No. 0266-2013).}
\begin{document}
\renewcommand{\labelenumi}{\textup{(\roman{enumi})}}
\renewcommand{\labelenumii}{\textup{(\roman{enumi}.\alph{enumii})}}
\numberwithin{equation}{section}

\begin{abstract}
Let $\k$ be an algebraically closed field of arbitrary characteristic, let $\A$ be a finite dimensional $\k$-algebra and let $V$ be a $\A$-module with stable endomorphism ring isomorphic to $\k$. If $\A$ is self-injective, then $V$ has a universal deformation ring $R(\A,V)$, which is a complete local commutative Noetherian $\k$-algebra with residue field $\k$. Moreover,  if $\Lambda$ 
is further a Frobenius $\k$-algebra, then $R(\Lambda,V)$ is stable under syzygies.  We use these facts to determine the universal deformation rings of string $\Lambda_N$-modules whose corresponding stable endomorphism ring is isomorphic to $\k$, and which lie either in a connected component of the stable Auslander-Reiten quiver of $\A_{m,N}$ containing a module with endomorphism ring isomorphic to $\k$ or in a periodic component containing only string $\A_{m,N}$-modules, where $m\geq 3$ and $N\geq 1$ are integers, and $\Lambda_{m,N}$ is a self-injective special biserial $\k$-algebra.
\end{abstract}
\keywords{Universal deformation rings \and self-injective algebras \and self-injective special biserial algebras \and stable endomorphism rings}
\subjclass[2010]{16G10, 16G20,  20C20} 
\maketitle

\section{Introduction}\label{int}

Let $\k$ be a field of arbitrary characteristic, and denote by $\hat{\Ca}$ the category of all complete local commutative Noetherian $\k$-algebras with residue field $\k$. In particular, the morphisms in $\hat{\Ca}$ are continuous $\k$-algebra homomorphisms that induce the identity on $\k$. In this article, all modules are assumed to be from the left side and finitely generated. Suppose 
that $\A$ is a fixed finite dimensional $\k$-algebra and let $V$ be a $\A$-module.  We denote by $\End_\A(V)$ (resp., by $\SEnd_\A(V)$) the endomorphism ring (resp., the stable endomorphism ring) of $V$ (see e.g. \cite[\S IV.1]{auslander}). We denote by $\Gamma_s(\A)$ the stable Auslander-Reiten quiver of $\A$ (see \cite[VII]{auslander} and e.g. \cite[\S I.8.2]{erdmann}). Let $R$ be an arbitrary object in $\hat{\Ca}$. Following \cite{blehervelez}, a {\it lift} $(M,\phi)$ of $V$ over $R$ is a $R\otimes_\k\A$-module $M$ that is free over $R$ together with an isomorphism of $\A$-modules $\phi:\k\otimes_RM\to V$. If $\A$ is self-injective and the stable endomorphism ring of $V$ is isomorphic to $\k$, 
then there exists a particular object  $R(\A,V)$ in $\hat{\Ca}$ and a lift $(U(\A,V),\phi_{U(\A,V)})$ of $V$ over $R(\A,V)$, which is universal with respect to all isomorphism classes of 
lifts of $V$ over such $\k$-algebras $R$ (see \cite{blehervelez} and \S \ref{section2}). The ring $R(\A,V)$ and the isomorphism class of the lift $(U(\A,V),\phi_{U(\A,V)})$ are 
respectively called the {\it universal deformation ring} and the {\it universal deformation} of $V$.  In \cite{bleher9,blehervelez,velez}, universal deformation rings of modules over certain self-injective algebras, which are not Morita equivalent to a block of a group algebra, are also discussed. More recently, it was proved in \cite[Prop. 3.2.5]{blehervelez2} that the isomorphism class of universal deformation rings of modules is an invariant under stable equivalences of Morita type (as introduced by M. Brou\'e in \cite{broue}) between self-injective $\k$-algebras. These results together with \cite[Thm. 3.4]{holm} were used in \cite[\S 4]{blehervelez} to classify (up to stable equivalence of Morita type) the isomorphism class of universal deformation rings of modules whose endomorphism ring is isomorphic to $\k$ over the class of algebras of dihedral type $D(3\mathcal{R})$ (see \cite[\S VI.5]{erdmann}). 

Traditionally, universal deformation rings are studied when $\A$ is equal to a group algebra $\k G$, where $G$ is a finite group and $\k$ has positive characteristic $p$ (see e.g., 
\cite{bleher1,bleher7,bleher2,bleher3,bleher4,bleher5,bleher6}).  
Deformation of modules over more general and other types of finite dimensional algebras have been studied by various authors in different contexts (see e.g., \cite{auslander4,ile,yau} and their references).  We refer the reader to \cite{blehervelez2} in order to obtain information about deformation of complexes for finite dimensional algebras in the context of derived categories and derived equivalence. 

Assume that $\k$ is algebraically closed, let $m$ and $N$ be integers with $m\geq 3$ and $N\geq 1$, and let $\A_{m,N}$ the basic $\k$-algebra $\A_{m,N}$ as in Figure \ref{fig1}. 
\begin{figure}[ht]
\begin{align*}
Q&=\xymatrix@1@=25pt{
&\underset{0}{\bullet}\ar@/^/[dl]^{\bar{a}_{m-1}}\ar@/^/[rr]^{a_0}&&\underset{1}{\bullet}\ar@/^/[ll]^{\bar{a}_0}\ar@/^/[dr]^{a_1}&\\
\underset{m-1}{\bullet}\ar@/^/[ur]^{a_{m-1}}\ar@/^/[d]^{\bar{a}_{m-2}}&&&	&\underset{2}{\bullet}\ar@/^/[ul]^{\bar{a}_1}\ar@/^/[d]^{a_2}\\
\underset{m-2}{\bullet}\ar@/^/[u]^{a_{m-2}}\ar@/^/@{.>}[dr]^{}&&&	&\underset{3}{\bullet}\ar@/^/[u]^{\bar{a}_2}\ar@/^/@{.>}[dl]^{}\\
&\underset{\ast}{\bullet}\ar@/^/@{.>}[ul]^{}\ar@/^/@{.>}[rr]^{}&&\underset{\ast}{\bullet}\ar@/^/@{.>}[ll]^{}\ar@/^/@{.>}[ur]^{}&
}\\\\
I_{m,N}&= \langle a_{i+1}a_i, \bar{a}_{i-1}\bar{a}_i, (\bar{a}_ia_i)^N-(a_{i-1}\bar{a}_{i-1})^N :i\in \Z/m\rangle.
\end{align*}
\caption{The basic $\k$-algebra $\A_{m,N}=\k Q/I_{m,N}$.}\label{fig1}
\end{figure}


The $\k$-algebra $\A_{m,N}$ is a self-injective (so Frobenius by e.g. \cite[Cor. 4.3]{skow}) special biserial $\k$-algebra (as introduced in \cite{wald})), it follows that all the non-projective indecomposable $\A_{m,N}$-modules can be described combinatorially by using so-called string and bands for $\A_{m,N}$; the corresponding indecomposable $\A_{m,N}$-modules are called string and band modules (see \cite{buri}). In this article, we are interested only in these string $\A_{m,N}$-modules.  

These $\k$-algebras $\A_{m,N}$ have been studied by various authors under different contexts (see e.g. \cite{erdmann2,snashall2,snashall1} and their references). In particular, if $m=3$ and $N=1$, then $\A_{3,1}=D(3\mathcal{K})^{1,1,1}$ is an algebra of dihedral type of polynomial growth as discussed by K. Erdmann and A. Skowro\'nski in  \cite[\S 4]{erdmann3}. Moreover, it follows from \cite[\S V.2.4.1]{erdmann} that if $\mathrm{char}\,\k=2$, then $\A_{3,1}$ is isomorphic to the group algebra of the alternating group $A_4$. On the other hand $\A_{3,1}$ is among the ones studied by F. M. Bleher in \cite[\S 3.2, \S 4.2]{bleher1} (for the case $d=2$ with $\mathrm{char}\,\k=2$), and with S. N. Talbott in \cite{bleher9}.  In the latter, which uses the results in \cite{blehervelez}, they proved that if $\mathfrak{C}$ is a non-periodic component of $\Gamma_s(\A_{3,1})$, then every module $V$ belonging to $\mathfrak{C}$ has stable endomorphism ring $\SEnd_{\A_{3,1}}(V)$ isomorphic to $\k$, and that the universal deformation ring $R(\A_{3,1},V)$ of $V$ is also isomorphic to $\k$ (see \cite[Prop. 3.1(iii)]{bleher9}). If $\mathfrak{T}$ is a $3$-tube of $\Gamma_s(\A_{3,1})$, then there is precisely one $\Omega$-orbit of $\A_{3,1}$-modules $W$ in $\mathfrak{T}$ whose stable endomorphism ring $\SEnd_{\A_{3,1}}(W)$ is isomorphic to $\k$, and $R(\A_{3,1},W)$ is also isomorphic to $\k$ (see \cite[Prop. 3.2(iii)]{bleher9}).  

The reader is invited to look at the references in e.g. \cite{erdmann2,snashall2,snashall1} for getting further information concerning the algebras $\A_{m,N}$ in different settings. 

The main goal of this article is to prove the following result (for more specific results and details see Remark \ref{rem5}, Corollary \ref{cor3.4}, and Propositions  \ref{prop4}, \ref{prop5}, \ref{prop7} and \ref{prop6}). 

\begin{theorem}\label{thm1}
Let $\A_{m,N}$ be the basic $\k$-algebra as in Figure \ref{fig1}, and let $\Gamma_s(\A_{m,N})$ denote the stable Auslander-Reiten quiver of $\A_{m,N}$.
\begin{enumerate}
\item If $N=1$, then there are at most finitely many components $\mathfrak{C}$ of $\Gamma_s(\A_{m,N})$ containing a string $\A_{m,N}$-module of minimal string length whose endomorphism ring is isomorphic to $\k$.
\item Assume that $N\geq 2$.
\begin{enumerate}
\item If $m$ is odd, then there are at most finitely many components $\mathfrak{C}$ of type $\mathbb{ZA}_\infty^\infty$ of $\Gamma_s(\A_{m,N})$ containing a string $\A_{m,N}$-module $V$ with $\End_{\A_{m,N}}(V)\cong \k$.
 \item If $m$ is even, then there are infinitely many components $\mathfrak{C}$ of type $\mathbb{ZA}_\infty^\infty$ of $\Gamma_s(\A_{m,N})$ containing a string $\A_{m,N}$-module $V$ with $\End_{\A_{m,N}}(V)\cong \k$.
\end{enumerate}
\item Let $\mathfrak{C}$ be a component of $\Gamma_s(\A_{m,N})$ containing a string $\A_{m,N}$-module $V$ whose endomorphism ring is isomorphic to $\k$, and let $W$ be a $\A_{m,N}$-module lying in $\mathfrak{C}$ with $\SEnd_{\A_{m,N}}(W)\cong \k$.

\begin{enumerate}
\item If $N=1$, then the universal deformation ring $R(\A_{m,1},W)$ is isomorphic to $\k$. 
\item If $N\geq 2$, then the universal deformation ring $R(\A_{m,N},W)$ is isomorphic either to $\k$, or to $\k[[t]]/(t^N)$, or to $\k[[t]]$. 
\end{enumerate}
\item Let $\mathfrak{T}$ be a tube of $\Gamma_s(\A_{m,N})$ containing only string $\A_{m,N}$-modules, and let $W$ be a $\A_{m,N}$-module lying in $\mathfrak{T}$ with $\SEnd_{\A_{m,N}}(W)\cong \k$. Then $R(\A_{m,N},W)$ is isomorphic either to $\k$ or to $\k[[t]]$. 
\end{enumerate}
\end{theorem}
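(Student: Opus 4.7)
The strategy splits naturally into two halves: parts (i) and (ii) are structural statements about the components of $\Gamma_s(\A_{m,N})$ containing string modules of a prescribed type, whereas (iii) and (iv) compute the actual universal deformation ring of a given module $W$ in such a component. The plan is to first settle the classification (i)--(ii) by combinatorial means, then bootstrap the deformation-theoretic computation (iii)--(iv) using the passage from an arbitrary module $W$ with $\SEnd_{\A_{m,N}}(W)\cong\k$ to a string module $V$ in the same $\Omega$-orbit with $\End_{\A_{m,N}}(V)\cong\k$.

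For (i) and (ii), I would use the Butler--Ringel calculus: morphisms between string modules are spanned by ``graph maps'' arising from common substrings, so $\End_{\A_{m,N}}(M(C))\cong\k$ is equivalent to the combinatorial condition that $C$ admits no non-trivial self-overlap. One then enumerates, up to the natural cyclic rotation acting on the quiver $Q$, the strings for $\A_{m,N}$ (i.e. reduced walks in $Q$ avoiding $a_{i+1}a_i$, $\bar a_{i-1}\bar a_i$, and the ``long'' relations inside $I_{m,N}$) satisfying this condition. For $N=1$ the relation $(\bar a_ia_i)=(a_{i-1}\bar a_{i-1})$ bounds string lengths by a function of $m$, which immediately yields finiteness. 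For $N\ge 2$ one instead analyzes the $\Omega$-orbits within the $\mathbb{Z}\mathbb{A}_\infty^\infty$ components using the known shape of $\Gamma_s$ for self-injective special biserial algebras; the key observation is that cyclic rotation of a string around the $m$-cycle produces a new orbit unless it is absorbed by the $\Omega$-action, and a parity argument on $m$ shows exactly when this happens, yielding the dichotomy between finite (odd $m$) and infinite (even $m$).

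For (iii) and (iv), I would use that $\A_{m,N}$ is Frobenius, so by the result of Bleher--V\'elez-Marulanda recalled in the abstract, $R(\A_{m,N},W)$ is invariant under the syzygy operator. Thus one may replace $W$ by any module in its $\Omega$-orbit, and by hypothesis that orbit meets a string module $V$ with $\End_{\A_{m,N}}(V)\cong\k$, so that one can work with an honest (not merely stable) endomorphism ring. The tangent space $\Ext^1_{\A_{m,N}}(V,V)$, computed combinatorially from the minimal projective presentation of a string module, bounds the number of generators of $R(\A_{m,N},V)$; the expected outcome is $\dim_\k\Ext^1\le 1$ for the $V$'s that arise. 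When $\Ext^1$ vanishes one gets $R\cong\k$ directly. When $\Ext^1$ is one-dimensional, the candidate non-trivial lift comes from the ``long'' relation $(\bar a_ia_i)^N-(a_{i-1}\bar a_{i-1})^N$: replacing $0$ by $t$ in this relation defines a $\k[[t]]$-deformation of the path algebra, and restricting it to the appropriate free $R$-module supporting $V$ produces a lift over $\k[[t]]/(t^N)$ that is obstructed at step $N$ (when $V$ lives in a $\mathbb{Z}\mathbb{A}_\infty^\infty$ component that crosses the ``deformable'' part of the cycle only finitely often) or extends to all of $\k[[t]]$ (when the string is sufficiently periodic, which is the tube case (iv)). Matching these lifts against the tangent-space upper bound and ruling out the other truncations should give the three possibilities $\k$, $\k[[t]]/(t^N)$, $\k[[t]]$ in (iii)(b), and just $\k$ or $\k[[t]]$ in (iv).

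The main obstacle will be twofold. In parts (i)--(ii), the hard step is verifying the finiteness/infiniteness dichotomy precisely: one must identify strings whose AR-translates $\tau V$ and syzygies $\Omega V$ keep the endomorphism-ring condition, and this requires a careful analysis of how the Auslander--Reiten sequence interacts with the string combinatorics in the presence of the ``long'' relation. In parts (iii)--(iv), the delicate point is distinguishing $R\cong\k[[t]]/(t^N)$ from $R\cong\k[[t]]$ in case (iii)(b): one has to show that the obstruction to extending the natural $\k[[t]]/(t^N)$-lift to a $\k[[t]]/(t^{N+1})$-lift is genuinely non-zero (and not artificially introduced), which amounts to a careful Massey-product style calculation inside the explicit presentation of $\A_{m,N}$.
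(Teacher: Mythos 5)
Your overall architecture (classify strings with trivial endomorphism ring via the Butler--Ringel/Krause graph-map calculus, then use syzygy invariance and the tangent space $\Ext^1$ to pin down the deformation rings) is the same as the paper's, and parts (i)--(ii) of your plan essentially reproduce Proposition \ref{prop3.1}, Remark \ref{rem5} and Corollary \ref{cor3.4} -- though the odd/even dichotomy comes from the combinatorics of an alternating string winding around an odd versus an even cycle (for odd $m$ any zigzag of string length at least $m$ acquires a canonical endomorphism through a simple module, while for even $m$ the strings $W_i^{(l)}$ stay rigid and, for $N\geq 2$, are of minimal string length), not from cyclic rotation being ``absorbed by the $\Omega$-action.'' The genuine gaps are in (iii)--(iv). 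First, your reduction ``replace $W$ by a module in its $\Omega$-orbit, which by hypothesis meets a string module $V$ with $\End_{\A_{m,N}}(V)\cong\k$'' misreads the hypothesis: the theorem only assumes the \emph{component} contains such a $V$, not every $\Omega$-orbit of a module with stable endomorphism ring $\k$. For $N\geq 2$ most of the relevant orbits contain no module with ordinary endomorphism ring $\k$: in the component of $M[a_i]$ the modules with $\SEnd_{\A_{m,N}}\cong\k$ lie in the $\Omega$-orbits of $V_n=M[\underline{D}_{i+1,n}a_i]$, $0\leq n\leq \kappa_m-1$, and for $n\geq 1$ these strings contain segments $(a_{k+1}\bar a_{k+1})^{N-1}$, so their endomorphism rings are strictly larger than $\k$ (the extra endomorphisms factor through projectives), as are those of their syzygies. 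So one must still, as in Propositions \ref{prop4}, \ref{prop5}, \ref{prop7} and \ref{prop6}, determine exactly which $\Omega$-orbits in $\mathfrak{C}\cup\Omega\mathfrak{C}$ (resp.\ $\mathfrak{T}\cup\Omega\mathfrak{T}$) have stable endomorphism ring $\k$ and compute $\SHom_{\A_{m,N}}(\Omega V_n,V_n)$ for a representative of each orbit; syzygy invariance does not let you work only with modules having $\End\cong\k$.

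Second, the decisive computations are missing, and your heuristic for which ring occurs where is off. Establishing $R(\A_{m,N},M[a_i])\cong\k[[t]]/(t^N)$ requires both a lift over $\k[[t]]/(t^N)$ and a proof that it does not extend to $\k[[t]]/(t^{N+1})$; the paper builds the lifts as the string modules $M[(a_i\bar a_i)^{l}a_i]$ with $t$ acting by the canonical nilpotent endomorphism, and kills the extension by showing any putative lift $M_0$ over $\k[[t]]/(t^{N+1})$ splits as $V_0\oplus M[S_{N-1}]$ (since $\Ext^1_{\A_{m,N}}(M[S_{N-1}],V_0)=0$) and that then $t^N M_0=0$, contradicting $R$-freeness (Claim \ref{claim1}). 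Your proposal of ``replacing $0$ by $t$'' in the relation $(\bar a_ia_i)^N-(a_{i-1}\bar a_{i-1})^N$ deforms the algebra rather than producing module-theoretic lifts of the specific $V$, and it does not address this obstruction, which you yourself flag as the main difficulty. Likewise $R\cong\k[[t]]$ requires an unobstructed lift over $\k[[t]]$ itself, obtained in the paper as an inverse limit $\invlim M[T_l]$ of string-module lifts (Claim \ref{claim2}). Finally, the case division you sketch (obstructed at step $N$ in the $\mathbb{ZA}_\infty^\infty$ components, unobstructed in the tubes) does not match the actual answer: $\k[[t]]$ also occurs in $\mathbb{ZA}_\infty^\infty$ components (at $V_{\kappa_m-1}$ when $m$ is even), the tubes yield only $\k$ or $\k[[t]]$ and never $\k[[t]]/(t^N)$, and $\k[[t]]/(t^N)$ occurs only at the length-one strings $a_i$, $\bar a_i$; so following your criterion would assign wrong rings in both families.
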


In order to prove Theorem \ref{thm1}, we use similar methods to those in \cite{bleher9,blehervelez} and \cite{velez}. Note that Theorem \ref{thm1} provides an extension of the results obtained in \cite{bleher9} for the algebra $\A_{3,1}$. Theorem \ref{thm1} also shows the remarkable difference between the cases $m$ odd and $m$ even for the $\k$-algebra $\A_{m,N}$ concerning the components of $\Gamma_s(\A_{m,N})$ with $N\geq 2$.  

This article is organized as follows. In \S \ref{section2}, we recall the definitions of deformations and universal deformation rings and summarize some of their properties. In \S \ref{section3}, we describe the radical series of the indecomposable projective modules for $\A_{m,N}$, and classify all $\A_{m,N}$-modules whose endomorphisms rings are isomorphic to $\k$ (see Proposition \ref{prop3.1}). In \S \ref{section4}, we prove Theorem \ref{thm1}.  For the convenience of the reader, in \S \ref{appendix} we describe some basic aspects concerning the representation theory of $\A_{m,N}$, including a precise description of string modules for $\A_{m,N}$ and of the corresponding components of $\Gamma_s(\A_{m,N})$  using hooks and co-hooks (see \cite{buri}). Moreover, we also give a description of the homomorphisms between strings modules as determined by H. Krause in \cite{krause}. 

We refer the reader to e.g., \cite{assem3,auslander,benson,erdmann} for getting further information about basic concepts from the representation theory of finite dimensional algebras such as the definition and properties of the syzygy functor $\Omega$ as well as the definition of the stable Auslander-Reiten quiver of an arbitrary Artinian algebra $\A$.

\section{Preliminary Results on Deformations and Universal Deformation Rings}
\label{section2}
Let $\k$ be a field of arbitrary characteristic and denote by $\hat{\Ca}$ the category of all complete local commutative Noetherian $\k$-algebras with residue field $\k$. In particular, the morphisms in $\hat{\Ca}$ are continuous $\k$-algebra homomorphisms that induce the identity on $\k$. Let $\A$ be a fixed finite dimensional $\k$-algebra. Recall that $\A$ is said to be self-injective if the regular left $\A$-module $_\A\A$ is injective, and that $\A$ is called a Frobenius algebra provided that the left $\A$-modules ${_\A}\A$ and $(\A_\A)^\ast=\Hom_\k(\A_\A,\k)$ are isomorphic.  By \cite
[Prop. 9.9]{curtis}, every Frobenius algebra is self-injective. It is also well-known that a basic algebra is self-injective if and only if it is Frobenius (see e.g. \cite[Cor. 4.3]{skow}). Let $V$ be a $\A$-module that has finite dimension over $\k$. We denote by $\End_\A(V)$ (resp., by $\SEnd_\A(V)$) the endomorphism ring (resp., the stable endomorphism ring) of $V$ (see e.g. \cite[\S IV.1]{auslander}), and we denote the first syzygy of  $V$ by $\Omega V$, i.e., $\Omega V$ is the kernel of a projective cover $P_V\to V$ (see e.g., \cite[pp. 124-126]{auslander}).
Following \cite{blehervelez}, a {\it lift} $(M,\phi)$ of $V$ over $R$ is a finitely generated $R\otimes_\k\A$-module $M$ that is free over $R$ together with an isomorphism of $\A$-modules $\phi:\k\otimes_RM\to V$. Two lifts $(M,\phi)$ and $(M',\phi')$ over $R$ are said to be {\it isomorphic} provided that there exists an $R\otimes_\k\A$-module 
isomorphism $f:M\to M'$ such that $\phi'\circ (\text{id}_\k\otimes f)=\phi$, where $\text{id}_\k$ denotes the identity map on $\k$. If $(M,\phi)$ is a lift of $V$ over $R$, we denote by $[M,\phi]$ its isomorphism class and say that $[M,\phi]$ is a {\it deformation} of $V$ over $R$. We denote by $\Def_\A(V,R)$ the 
set of all deformations of $V$ over $R$. The {\it deformation functor } over $V$ is the 
covariant functor $\hat{\Fun}_V:\hat{\Ca}\to \Sets$ defined as follows: for all objects $R$ in $\hat{\Ca}$, define $\hat{\Fun}_V(R)=\Def_\A(V,R)$; and for all morphisms $\alpha:R\to 
R'$ in 
$\hat{\Ca}$, 
let $\hat{\Fun}_V(\alpha):\Def_\A(V,R)\to \Def_\A(V,R')$ be defined as $\hat{\Fun}_V(\alpha)([M,\phi])=[R'\otimes_{R,\alpha}M,\phi_\alpha]$, where $\phi_\alpha: \k\otimes_{R'}
(R'\otimes_{R,\alpha}M)\to V$ is the composition of $\A$-module isomorphisms 
\[\k\otimes_{R'}(R'\otimes_{R,\alpha}M)\cong \k\otimes_RM\xrightarrow{\phi} V.\]  

Following \cite[\S 2.6]{sch}, we call the set $\hat{\Fun}_V(\k[[t]]/(t^2))$ the tangent space of $\hat{\Fun}_V$, which has an structure of a $\k$-vector space by \cite[Lemma 2.10]{sch}. 
By using Schlessinger's criteria \cite[Thm. 2.11]{sch} and using methods similar to those in \cite{mazur}, it was proved in \cite[Prop. 2.1]{blehervelez} that the deformation functor $\hat{\Fun}_V$ is continuous (see \cite[\S VI.14]{mazur}), that there exists an isomorphism of $\k$-vector spaces 
\begin{equation}\label{hoch}
\hat{\Fun}_V(\k[[t]]/(t^2))\to \Ext_\A^1(V,V),
\end{equation}
and that there exists an object $R(\A,V)$ in $\hat{\Ca}$ and a deformation $[U(\A,V), \phi_{U(\A,V)}]$ of $V$ over $R(\A,V)$ with the following property. For each object $R$ in $\hat{\Ca}$ 
and for all lifts $M$ of $V$ over $R$, there exists a morphism $\upsilon:R(\A,V)\to R$ in $\hat{\Ca}$ such that 
\[\hat{\Fun}_V(\upsilon)[U(\A,V), \phi_{U(\A,V)}]=[M,\phi],\]
and moreover, $\upsilon$ is unique if $R$ is the ring of dual numbers $\k[[t]]/(t^2)$.  In this situation $R(\A,V)$ and $[U(\A,V), \phi_{U(\A,V)}]$ are called respectively the {\it versal deformation ring} and {\it versal deformation} of $V$. If $R(\A,V)$ represents $\hat{\Fun}_V$ in the sense that the functors $\hat{\Fun}_V(-)$ and $\Hom_{\hat{\Ca}}(R(\A,V),-)$ are isomorphic, then we call $R(\A,V)$ and $[U(\A,V), \phi_{U(\A,V)}]$ respectively the {\it universal deformation ring} and {\it universal deformation} of $V$. 
Assume that $V$ has an universal deformation ring $R(\A,V)$. From the isomorphism of $\k$-vector spaces (\ref{hoch}), it follows that if $\Ext_\A^1(V,V)=0$, then $R(\A,V)\cong \k$, and if $\Ext_\A^1(V,V)=\k$, then $R(\A,V)$ is isomorphic to a quotient of $\k[[t]]$. 
On the other hand, in \cite[Prop. 2.5]{blehervelez} it was proved that the isomorphism class of versal deformation rings of modules is preserved under equivalences of Morita type. The following result that concerns modules over self-injective algebras is one of the main results in \cite{blehervelez}.

\begin{theorem}{(\cite[Thm. 2.6]{blehervelez})}\label{thm3}
Let $\A$ be a finite dimensional self-injective $\k$-algebra, and suppose that $V$ is a finitely generated $\A$-module whose stable endomorphism ring $\SEnd_\A(V)$ is isomorphic 
to $\k$.
\begin{itemize}
\item[\textup{(i)}] The module $V$ has a universal deformation ring $R(\A,V)$.
\item[\textup{(ii)}] If $P$ is a finitely generated projective $\A$-module, then $\SEnd_\A(V\oplus P)\cong \k$ and $R(\A,V\oplus P)\cong R(\A,V)$.
\item[\textup{(iii)}] If $\A$ is also a Frobenius algebra, then $\SEnd_\A(\Omega V)\cong \k$ and $R(\A,\Omega V)\cong R(\A,V)$. 
\end{itemize}
\end{theorem}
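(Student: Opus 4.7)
The plan is to invoke Schlessinger's criteria \cite[Thm.~2.11]{sch} and pinpoint precisely what additional input the self-injective hypothesis and the condition $\SEnd_\A(V)\cong \k$ contribute. Conditions H1--H3 for $\hat{\Fun}_V$ (ensuring the existence of a versal hull) are already implicit in the discussion preceding the theorem, with the tangent space computation in \eqref{hoch} supplying the required finite-dimensionality. Upgrading this versal ring to a universal one in part (i) amounts to verifying Schlessinger's stronger condition H4.

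For part (i), the crux is to show that if $\pi\colon R'\to R$ is a small surjection in $\hat{\Ca}$ with kernel $J$ annihilated by the maximal ideal of $R'$, then any two isomorphisms between lifts of $V$ over $R'$ that agree modulo $J$ already coincide. Unwinding this, the obstruction to injectivity is controlled by endomorphisms of $V$ that factor through a projective $\A$-module; the hypothesis $\SEnd_\A(V)\cong \k$ forces precisely these classes to be trivial, and self-injectivity of $\A$ ensures that an endomorphism of $V$ factoring through a projective can be lifted to an endomorphism of a lift $M$ compatible with $\pi$. This is a module-theoretic adaptation of Mazur's argument \cite{mazur} in the Galois setting.

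Part (ii) follows from two observations. First, projective summands vanish in the stable category, giving $\SEnd_\A(V\oplus P)\cong \SEnd_\A(V)\cong \k$. Second, any lift $N$ of $V\oplus P$ over $R$ in $\hat{\Ca}$ splits as $M\oplus (R\otimes_\k P)$ for some lift $M$ of $V$: since $R\otimes_\k P$ is projective as an $(R\otimes_\k\A)$-module, the idempotent of $\End_\A(V\oplus P)$ projecting onto $P$ lifts to an idempotent of $\End_{R\otimes_\k \A}(N)$, using that the kernel of the reduction map lies in the Jacobson radical. This yields a natural isomorphism $\hat{\Fun}_V\cong \hat{\Fun}_{V\oplus P}$, and hence $R(\A,V\oplus P)\cong R(\A,V)$.

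For part (iii), when $\A$ is Frobenius the syzygy $\Omega$ is an auto-equivalence of the stable category, so $\SEnd_\A(\Omega V)\cong \SEnd_\A(V)\cong \k$ and a universal deformation ring for $\Omega V$ exists by part (i). To identify it with $R(\A,V)$, I would observe that $R\otimes_\k \A$ is Frobenius over $R$ for every $R$ in $\hat{\Ca}$, so projective covers exist in $(R\otimes_\k\A)\text{-mod}$ and their kernels remain free over $R$. Sending a lift $(M,\phi)$ of $V$ to the kernel of its projective cover produces a lift of $\Omega V$, yielding a natural transformation $\hat{\Fun}_V\to \hat{\Fun}_{\Omega V}$ whose inverse is induced by the cosyzygy $\Omega^{-1}$. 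The main obstacle is part (i): verifying H4 requires careful control of how stably-trivial endomorphisms of $V$ propagate to automorphisms of lifts, and this is exactly where both hypotheses intervene nontrivially; parts (ii) and (iii) are then formal consequences.
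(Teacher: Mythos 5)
Your outline follows the same route as the source of this theorem: the paper itself does not reprove it but quotes \cite[Thm. 2.6]{blehervelez}, whose proof likewise runs through Schlessinger's criteria \cite[Thm. 2.11]{sch}, upgrades versality to universality using $\SEnd_\A(V)\cong\k$, and handles (ii) and (iii) by natural isomorphisms of deformation functors (idempotent lifting, respectively syzygies over $R\otimes_\k\A$). However, the condition you isolate as the crux of (i) is misstated, and as stated it is false: if $f\colon M'_1\to M'_2$ is an isomorphism of lifts over $R'$ and $j\in J$ is nonzero, then $(1+j)f$ is another isomorphism which agrees with $f$ modulo $J$ but does not coincide with it (the $M'_i$ are $R'$-free, so $jM'_2\neq 0$). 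What universality actually requires is the opposite direction: isomorphisms (equivalently, automorphisms of deformations) over $R$ must \emph{lift} along the small surjection $R'\to R$, or in the formulation used in \cite{blehervelez} and recalled in Remark \ref{rem1}, any $R\otimes_\k\A$-isomorphism between lifts can be corrected to one compatible with the chosen isomorphisms $\phi,\phi'$ (``Claim 3'' there). The mechanism you then describe is the right one, but note the logic: $\SEnd_\A(V)\cong\k$ does not make endomorphisms factoring through projectives ``trivial''; it says every $\A$-endomorphism of $V$ is a scalar multiple of the identity plus one factoring through a projective, and the point is that the latter summand lifts to an endomorphism of the lift because projective $\A$-modules lift to projective $R\otimes_\k\A$-modules and $\A$ is self-injective. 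That decomposition-plus-lifting argument must be wired into the surjectivity statement above, not the injectivity statement you wrote.

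Two smaller points. In (ii), before invoking idempotent lifting you must show the idempotent projecting onto $P$ is in the image of $\End_{R\otimes_\k\A}(N)\to\End_\A(V\oplus P)$; this reduction map is not surjective for arbitrary endomorphisms, and here one again uses that the idempotent factors through the (projective, hence by self-injectivity also injective) module $P$ to lift both components $V\oplus P\to P$ and $P\to V\oplus P$. In (iii), the facts to verify are that the projective cover of a lift $M$ reduces to the projective cover of $V$ and that its kernel is $R$-free (the cover sequence is $R$-split because $M$ is $R$-free); with these in place your natural transformation argument is exactly the one in \cite{blehervelez}. With the universality condition corrected as above, your plan is sound and coincides with the cited proof.
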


More recently, it was proved in \cite[Prop. 3.2.5]{blehervelez2} that the isomorphism class of versal deformation rings is preserved under stable equivalences of Morita type (as introduced by M. Brou\'e in\ \cite{broue}) between self-injective $\k$-algebras.

\begin{remark}\label{rem1}
It is proved in Claim 3 within the proof of \cite[Thm. 2.6]{blehervelez} that if $\A$ is self-injective and $(M,\phi)$ is a lift of $V$ 
over an object $R$ in $\hat{\Ca}$ with $\SEnd_\A(V)\cong \k$, then the deformation $[M,\phi]$ corresponding to the lift $(M, \phi)$ does not depend on the particular choice of the $\A$-module isomorphism. More 
precisely, if $f:M\to M'$ is an $R\otimes_\k \A$-module isomorphism with $(M',\phi')$ a lift of $V$ over $R$, then there exists an $R\otimes_\k\A$-module isomorphism $\bar{f}:M\to M'$ 
such that $\phi'\circ (\mathrm{id}_\k\otimes _R\bar{f})=\phi$, i.e., $[M,\phi]=[M',\phi']$ in $\hat{\Fun}_V(R)=\Def_\A(V,R)$. 
\end{remark}

\section{Classification of the string $\A_{m,N}$-modules with endomorphism ring isomorphic to $\k$}\label{section3}
For the remainder of this article, let $\k$ be a fixed algebraically closed field of arbitrary characteristic. Let $m\geq 3$ and $N\geq 1$ be fixed but arbitrary integers, and let $\A_{m,N}=\k Q/I_{m,N}$ be as in Figure \ref{fig1}. We identify the vertices of $Q$ with elements of $\mathbb{Z}/m$ (the cyclic group of $m$ elements). Recall that these algebras are all basic self-injective (so Frobenius). A brief description of the representation theory of $\A_{m,N}$ is provided in  \S\ref{appendix}. Recall that $\Gamma_s(\A_{m,N})$ denotes the stable Auslander-Reiten quiver of $\A_{m,N}$.

\begin{remark}\label{rem0}
For all $m\geq 3$ and $N\geq 1$, the $\k$-algebra $\A_{m,N}$ is the Brauer graph algebra of a polygon with $m$ edges and multiplicity $N$ at every vertex. This implies in particular that the $\k$-algebras $\A_{m,N}$ are all symmetric (see \cite[Thm. 1.1]{schroll}). Therefore, it follows from \cite[Prop. IV.3.8]{auslander} that for all $\A_{m,N}$-modules $V$, $\tau_{\A_{m,N}} V=\Omega^2V$, where $\tau_{\A_{m,N}}$ denotes the Auslander translation of $\Gamma_s(\A_{m,N})$ (see \cite[Prop. IV.3.8 \& \S VII.1]{auslander}). In particular, for all connected components $\mathfrak{C}$ of $\Gamma_s(\A_{m,N})$, we have $\Omega^2\mathfrak{C}=\mathfrak{C}$. 
\end{remark}

For all vertex $i\in \Z/m$ of $Q$, the radical series of the projective indecomposable $\A_{m,N}$-module $P_i$ can be described as in Figure \ref{projec}.
\begin{figure}[ht]
\begin{align*}
P_i=\begindc{\commdiag}[150]
\obj(-3,3)[v1]{$M[\1_i]$}
\obj(-6,1)[v2]{$M[\1_{i+1}]$}
\obj(-6,-2)[v3]{$M[\1_{i+1}]$}
\obj(-3,-4)[v4]{$M[\1_i]$}
\obj(0,1)[v6]{$M[\1_{i-1}]$}
\obj(0,-2)[v7]{$M[\1_{i-1}]$}
\mor{v1}{v2}{$a_i$}[-1,0]
\mor{v2}{v3}{$(a_i\bar{a}_i)^{N-1}$}[-1,1]
\mor{v3}{v4}{$\bar{a}_i$}[-1,0]
\mor{v1}{v6}{$\bar{a}_{i-1}$}[1,0]
\mor{v6}{v7}{$(\bar{a}_{i-1}a_{i-1})^{N-1}$}[1,1]
\mor{v7}{v4}{$a_{i-1}$}[1,0]
\enddc
\end{align*}
\caption{The radical series of the indecomposable projective $\A_{m,N}$-module corresponding to the vertex $i\in \Z/m$.}\label{projec}
\end{figure} 
\subsection{$\A_{m,N}$-modules with endomorphism ring isomorphic to $\k$}

For all strings $C$ for $\A_{m,N}$, we read $C$ from right to left, and denote by $\mathbf{s}(C)$ and $\mathbf{t}(C)$, the vertex where $C$ starts and ends, respectively. 
Let $i\in \Z/m$ be fixed but arbitrary. Let $Z_i'$ be the string representative for $\A_{m,N}$ 
\begin{equation*}\label{zigzag1}
Z_i' = \cdots a_{i+2}\bar{a}_{i+1}^{-1}a_i
\end{equation*}
such that  
\begin{equation*}
\mathbf{t}(Z_i')=
\begin{cases}
i-1, & \text{ if $m$ is odd,}\\
i, & \text{ otherwise,}
\end{cases}
\end{equation*}
and $Z'_i$ has substrings equivalent neither to $(a_k\bar{a}_k)^l$ nor $(\bar{a}_ka_k)^l$ for some $k\in \Z/m$ and $l\geq 1$. 
Similarly, let $Z_i''$ be the string representative for $\A_{m,N}$ 
\begin{equation*}\label{zigzag2}
Z_i'' = \cdots \bar{a}_{i+2}^{-1}a_{i+1}\bar{a}_i^{-1}
\end{equation*}
such that  
\begin{equation*}
\mathbf{t}(Z_i'')=
\begin{cases}
i-1, & \text{ if $m$ is odd,}\\
i, & \text{ otherwise,}
\end{cases}
\end{equation*}
and $Z''_i$ has substrings equivalent neither to $(a_k\bar{a}_k)^l$ nor $(\bar{a}_ka_k)^l$ for some $k\in \Z/m$ and $l\geq 1$.
For all $i\in \Z/m$, we let  
\begin{equation}\label{zigzag}
Z_i\in \{Z_i',Z_i''\}. 
\end{equation}

For example, if $m=6$, then 
\begin{align*}
Z_0'&=\bar{a}_5^{-1}a_4\bar{a}_3^{-1}a_2\bar{a}_1^{-1}a_0 &&\text{ and }&& Z_0''=a_5\bar{a}_4^{-1}a_3\bar{a}_2^{-1}a_1\bar{a}_0^{-1}.
\end{align*}

Note that that for all $i\in \Z/m$, the string length of $Z_i$ is $m-1$ when $m$ is odd, and $m$ otherwise. 
If $m$ is even, then for all $l\geq 0$ we define {\bf strings} representatives $Z_i^{(l)}$ for $\A_{m,N}$ inductively as follows:
\begin{align}\label{induczigzag}
Z_i^{(0)}&=Z_i, &&\text{ and }\\
Z_i^{(l+1)}&=\theta_{\mathbf{t}\left(Z_i^{(l)}\right)}Z_i^{(l)} &&\text{for all $l\geq 0$},\notag
\end{align}
where for all $k\in \Z/m$, $\theta_k\in \{\bar{a}_{k+1}^{-1}a_k, a_{k+1}\bar{a}_k^{-1}\}$. For all integers $n\geq 0$, we define the string $W_i^{(n)}$ for $\A_{m,N}$ as 
\begin{equation}\label{stringW}
W_i^{(l)}=\begin{cases}
Z_i,&\text{ if $m$ is odd,}\\
Z_i^{(l)},&\text{ otherwise.}
\end{cases}
\end{equation} 
\begin{proposition}\label{prop3.1}
Let $M[S]$ be a string $\A_{m,N}$-module. Then $M[S]$ has endomorphism ring isomorphic to $\k$ if and 
only if there exists $i\in \Z/m$ such that the string representative $S$ is equivalent either to $\1_i$, or to $a_i$, or to $\bar{a}_i$, or to a substring of $Z_i$ as in (\ref{zigzag}), or to $W_i^{(l)}$ as in (\ref{stringW}) for some $l\geq 0$.
\end{proposition}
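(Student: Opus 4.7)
The plan is to apply H.~Krause's combinatorial description \cite{krause} of homomorphism spaces between string modules, reviewed in \S \ref{appendix}. That description provides a $\k$-basis of $\Hom_{\A_{m,N}}(M[S],M[T])$ indexed by pairs of admissible factorizations $S=\alpha\sigma\beta$ and $T=\gamma\sigma\delta$ through a common substring $\sigma$, subject to hook/cohook boundary conditions on the outer factors $\alpha,\beta,\gamma,\delta$. For $T=S$, the trivial factorization $\sigma=S$ (with $\alpha=\beta=\gamma=\delta=\1$) always contributes the scalar multiples of $\mathrm{id}_{M[S]}$, so $\End_{\A_{m,N}}(M[S])\cong \k$ is equivalent to the absence of any further admissible self-factorization of $S$.

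For the sufficiency direction, I would verify directly that each string in the stated list admits only the trivial admissible self-factorization. The strings $\1_i,a_i,\bar{a}_i$ are too short to admit any non-trivial factorization. For substrings of $Z_i$ and for the iterated zigzags $W_i^{(l)}$, the defining requirement that no subword is equivalent to $(a_k\bar{a}_k)^l$ or $(\bar{a}_ka_k)^l$ for any $k\in\Z/m$ and $l\geq 1$ means the string is a strict zigzag with no repeating ``corners''; combined with the end-conditions at $\mathbf{s}(S)$ and $\mathbf{t}(S)$, this is enough to rule out the alignment of $S$ with any proper substring of itself.

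For the necessity direction, assume $S$ is not on the list. Because the monomial relations $a_{i+1}a_i=0=\bar{a}_{i-1}\bar{a}_i$ force every string for $\A_{m,N}$ to alternate between direct and inverse letters except inside a maximal ``peak'' subword of the form $(\bar{a}_i a_i)^k$ or $(a_{i-1}\bar{a}_{i-1})^k$, two scenarios exhaust the ways in which $S$ can fail to lie in the list: either $S$ contains a peak subword with $k\geq 1$, or $S$ is a pure alternating zigzag that exceeds the length $m-1$ allowed for odd $m$, respectively that fails to be of the form $W_i^{(l)}$ by an incompatible corner turn for even $m$. In the first scenario, the repetition inside the peak supplies a non-trivial alignment of a short common substring $\sigma$ with itself at a shifted position; in the second, the excess length or deviation forces a repeated local configuration and hence again a shifted common substring. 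In either case one manufactures a graph map distinct from the identity, contradicting $\End_{\A_{m,N}}(M[S])\cong \k$.

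The main obstacle I expect is checking, uniformly across both parities of $m$ and across the possible positions of the distinguished subword within $S$, that the hook/cohook boundary conditions in Krause's criterion are met by the non-trivial alignments constructed; this amounts to a finite but delicate inspection of the end-of-string behaviour. The $\Z/m$-rotational symmetry of the quiver $Q$ and the involution $S\leftrightarrow S^{-1}$ on strings should reduce this bookkeeping considerably, allowing me to work with a single representative vertex $i$ throughout.
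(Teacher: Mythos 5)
Your proposal follows essentially the same route as the paper's proof: both rest on Krause's basis of canonical homomorphisms (graph maps), verify sufficiency by checking that the listed strings admit no non-trivial self-overlap, and prove necessity by the same dichotomy — a subword $(\bar{a}_ia_i)^l$ or $(a_i\bar{a}_i)^l$ with $l\geq 1$ yields a non-identity canonical endomorphism through $M[\1_i]$, $M[a_i]$ or $M[\bar{a}_i]$, while a pure zigzag that is too long (odd $m$) or not of the form $W_i^{(l)}$ (even $m$) yields one through a proper substring. The remaining work you flag (checking the boundary conditions case by case) is exactly the level of verification the paper itself leaves as routine, so your plan is sound.
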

\begin{proof}
If $S$ is a string representative for $\A_{m,N}$ as in the hypothesis of the ``if" part of Proposition \ref{prop3.1}, then it follows from \S\ref{appendix3} that $\End_{\A_{m,N}}(M[S])=\k$. Assume then that $S$ is a string representative for $\A_{m,N}$ such that $\End_{\A_{m,N}}(M[S])=\k$, and let $n$ be the string length of $S$. If $n=0$, then $S=\1_i$ for some $i\in \Z/m$. If $n=1$, then $S$ is string equivalent to an arrow or to the formal inverse of an arrow, which implies that there exists $i\in \Z/m$ such that $S\sim a_i$ or $S\sim \bar{a}_i$. Next assume that $n\geq 2$ and let $l$ be a maximal nonnegative integer such that for some $i\in \Z/m$, $(\bar{a}_ia_i)^l$ is string equivalent to a substring of $S$. If $l>0$, then there exist suitable strings $T$ and $T'$ for $\A_{m,N}$ such that $S\sim T(\bar{a}_ia_i)^l\bar{a}_iT'$, or $S\sim Ta_i(\bar{a}_ia_i)^lT'$, or $S\sim T(\bar{a}_ia_i)^lT'$. If $S\sim T(\bar{a}_ia_i)^l\bar{a}_iT'$ (resp. $S\sim Ta_i(\bar{a}_ia_i)^lT'$, resp. $S\sim T(\bar{a}_ia_i)^lT'$) then the maximality of $l$ implies that $(\bar{a}_ia_i)^l\bar{a}_i$ (resp. $a_i(\bar{a}_ia_i)^l$, resp. $(\bar{a}_ia_i)^l$) starts in a peak and ends in a deep as a substring of $S$ (as explained in \S \ref{ape2}), which implies that there exists a non-trivial canonical endomorphism of $M[S]$ as in (\ref{canhom}) that factors through the string $\A_{m,N}$-module $M[\bar{a}_i]$ (resp. $M[a_i]$, resp. $M[\1_i]$), and thus $\dim_\k\End_{\A_{m,N}}(M[S])\geq 2$, contradicting the hypothesis. It follows then that $l=0$, which implies that neither $\bar{a}_ia_i$ nor its formal inverse is a substring of $S$ for all $i\in \Z/m$. Similarly we obtain that neither $a_i\bar{a}_i$ nor its formal inverse is a substring of $S$ for all $i\in \Z/m$. Therefore, $S=w_nw_{n-1}\cdots w_1$, where for each $1\leq j\leq n$, $w_j$ or $w_j^{-1}$ is in $\{a_k, \bar{a}_k\}$ for a suitable $k\in \Z/m$. Assume first that $m$ is odd. If $n\geq m$, then there exists at least a canonical endomorphism of $M[S]$  as in (\ref{canhom}) that factors through a simple $\A_{m,N}$-module, which again contradicts the hypothesis, and thus $n< m$. This implies that $S$ is equivalent to a substring of $Z_i$ for some $i\in \Z/m$. Next assume that $m$ is even. If $n\leq m$, then as before, $S$ is equivalent to a substring of $Z_i$ for some $i\in \Z/m$. Assume next that $n>m$ and let $l\geq 0$ be maximal such that $S\sim TZ_i^{(l)}$ for a suitable string $T$, where $Z_i^{(l)}$ is as in (\ref{induczigzag}). Assume further that $T$ has positive string length. Since for all $k\in \Z/m$, $S$ does contain a substring equivalent neither to $a_k\bar{a}_k$ nor to $\bar{a}_ka_k$, it follows from the maximality of $l$ that $T$ is equivalent to $a_k$ or to $\bar{a}_k$ for some $k\in \Z/m$.  This implies that there exists a non-trivial canonical endomorphism of the string $\A_{m,N}$-module $M[S]$ as in (\ref{canhom}) that factors through a non-simple string $\A_{m,N}$-module $M[S']$ with  $S'$ a string representative for $\A_{m,N}$, and which is equivalent to a proper substring of $S$. This contradicts that $\End_{\A_{m,N}}(M[S])\cong \k$, and therefore $T$ has string length equal to zero.  Hence in this situation, it follows that $S$ is equivalent to $Z_i^{(l)}=W_i^{(l)}$ for some $l\geq 0$.  This finishes the proof of Proposition \ref{prop3.1}. 
\end{proof}

\begin{remark}\label{rem5}
Let $i\in \Z/m$ be fixed but arbitrary and let $\mathfrak{C}$ be a connected component of $\Gamma_s(\A_{m,N})$ containing a string $\A_{m,N}$-module $M[S]$ with $\End_{\A_{m,N}}(M[S])\cong \k$. 
\begin{enumerate}
\item Note that if $N=1$, then for all $l\geq 0$, the string $\A_{m,1}$-module $M[W_i^{(l)}]$ as in (\ref{stringW}) can be obtained either from $M[a_i]$ or from $M[\bar{a}_i]$ by taking hooks or co-hooks as in \S\ref{ape2}, and thus they belong to the component of the Auslander-Reiten quiver of $\A_{m,1}$ containing either $M[a_i]$ or $M[\bar{a}_i]$. In this situation, we obtain that $\mathfrak{C}$ contains either a simple $\A_{m,1}$-module or contains one of the string $\A_{m,1}$-modules $M[a_i]$ or $M[\bar{a}_i]$ with $i\in \Z/m$, and thus there are at most finitely many possibilities for $\mathfrak{C}$.
\item Assume now that $N\geq 2$. Then for all $l\geq 0$, the string $\A_{m,N}$-module $M[W_i^{(l)}]$ cannot be obtained from a string $\A_{m,N}$-module of minimal string length by adding hooks or co-hooks, which implies that  $M[W_i^{(l)}]$ is of minimal string length. 
\begin{enumerate}
\item If $m$ is odd, then $\mathfrak{C}$ contains either a simple $\A_{m,N}$-module, or one of the string $\A_{m,N}$-modules $M[a_i]$, or $M[\bar{a}_i]$ with $i\in \Z/m$, or the string $\A_{m,N}$-module $M[T]$, where $T$ is a substring of $Z_i$ for $i\in \Z/m$.  In this situation we also have that there are at most finitely many possibilities for $\mathfrak{C}$. 
\item Finally, by Proposition \ref{prop3.1}, if $m$ is even then for all $l\geq 0$, $M[W_i^{(l)}]$ has endomorphism ring isomorphic to $\k$, which implies that in this situation, there are infinitely many possibilities for $\mathfrak{C}$. 
\end{enumerate}
\end{enumerate} 
\end{remark}

Following Remark \ref{rem5}, we obtain the following result. 

\begin{corollary}\label{cor3.4}
\begin{enumerate}
\item If $N=1$, then there are at most finitely many components of $\Gamma_s(\A_{m,1})$ containing a string $\A_{m,1}$-module of minimal string length whose endomorphism ring is isomorphic to $\k$.
\item Assume that $N\geq 2$.
\begin{enumerate}
\item If $m$ is odd, then there are at most finitely many components  of type $\mathbb{ZA}_\infty^\infty$ of $\Gamma_s(\A_{m,N})$ containing a string $\A_{m,N}$-module $V$ with $\End_{\A_{m,N}}(V)\cong \k$.
 \item If $m$ is even, then there are infinitely many components of type $\mathbb{ZA}_\infty^\infty$ of $\Gamma_s(\A_{m,N})$ containing a string $\A_{m,N}$-module $V$ with $\End_{\A_{m,N}}(V)\cong \k$.
\end{enumerate}
\end{enumerate}
\end{corollary}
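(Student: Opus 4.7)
The plan is to obtain Corollary \ref{cor3.4} essentially as a bookkeeping consequence of Proposition \ref{prop3.1}, Remark \ref{rem5}, and the hook/co-hook description of the AR-components recalled in \S\ref{appendix}. First I would invoke Proposition \ref{prop3.1} to fix the list of candidate string representatives whose associated string $\A_{m,N}$-module can have endomorphism ring isomorphic to $\k$: up to string equivalence these are $\1_i$, $a_i$, $\bar{a}_i$, a substring of $Z_i$, or $W_i^{(l)}$ for some $i\in\Z/m$ and some $l\geq 0$. This finite-plus-one-family description is what will drive all three cases.

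For part (i), suppose $N=1$ and let $\mathfrak{C}$ be a component of $\Gamma_s(\A_{m,1})$ containing a string module of minimal string length with endomorphism ring $\k$. By Remark \ref{rem5}(1), every $W_i^{(l)}$ with $l\geq 0$ arises from either $M[a_i]$ or $M[\bar{a}_i]$ by iteratively adjoining hooks or co-hooks, so it cannot be of minimal string length in its component. Hence the minimal-length candidates reduce to the finite set $\{M[\1_i], M[a_i], M[\bar{a}_i], M[T] : i\in \Z/m,\ T\text{ a substring of }Z_i\}$, producing only finitely many components. For part (ii.a), when $N\geq 2$ and $m$ is odd, the definition in (\ref{stringW}) gives $W_i^{(l)}=Z_i$ for every $l$, so the list of string modules with endomorphism ring $\k$ is finite up to isomorphism; each such module determines a unique component, and since $\A_{m,N}$ is symmetric (Remark \ref{rem0}), the non-periodic components are exactly those of type $\mathbb{ZA}_\infty^\infty$, yielding finitely many.

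For part (ii.b), where $N\geq 2$ and $m$ is even, the main obstacle is the opposite direction: I need to show that the infinitely many pairwise non-equivalent strings $W_i^{(l)}$, $l\geq 0$, genuinely distribute themselves over infinitely many components of type $\mathbb{ZA}_\infty^\infty$. The recursion in (\ref{induczigzag}) yields string lengths $|W_i^{(l)}| = m+2l$, so these modules are pairwise non-isomorphic and, by Remark \ref{rem5}(2), each is of minimal string length in its own component. The point is that a component $\mathfrak{C}$ of type $\mathbb{ZA}_\infty^\infty$ of $\Gamma_s(\A_{m,N})$ can contain only finitely many string modules of minimal string length: using the combinatorial description of $\mathfrak{C}$ via hooks and co-hooks recalled in \S\ref{appendix} together with $\tau_{\A_{m,N}}=\Omega^2$ (Remark \ref{rem0}), any two minimal-length modules in $\mathfrak{C}$ differ by a bounded number of $\Omega^2$-shifts. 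Consequently the infinite family $\{M[W_i^{(l)}]\}_{l\geq 0}$ forces infinitely many distinct components of type $\mathbb{ZA}_\infty^\infty$, completing the proof.

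The main technical obstacle is the last step, namely the claim that a single $\mathbb{ZA}_\infty^\infty$-component harbors only finitely many modules of minimal string length; the rest is a direct enumeration based on Proposition \ref{prop3.1}, and I would handle the parity dichotomy between parts (ii.a) and (ii.b) by simply reading off whether the zigzag family (\ref{stringW}) collapses to $Z_i$ or not.
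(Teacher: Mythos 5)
Your proposal is correct and follows essentially the same route as the paper, which obtains the corollary directly from Proposition \ref{prop3.1} and the case analysis of Remark \ref{rem5} (for $N=1$ the modules $M[W_i^{(l)}]$ are not of minimal string length; for $N\geq 2$ and $m$ odd the list of strings collapses to finitely many; for $m$ even the $M[W_i^{(l)}]$ form an infinite family of minimal string length). Your extra step justifying that a $\mathbb{ZA}_\infty^\infty$-component can absorb only finitely many minimal-string-length modules is a point the paper leaves implicit, and it is sound---indeed the hook/co-hook description in \S\ref{ape2} shows every module of such a component other than $M[C]$ itself arises by adding a hook or co-hook to a proper substring, so the minimal-string-length module is in fact unique in its component.
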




\section{Universal deformation rings for string $\A_{m,N}$-modules}\label{section4}
In this section with consider the components $\mathfrak{C}$ of the stable Auslander-Reiten quiver of $\A_{m,N}$ that contain a module whose endomorphism ring is isomorphic to $\k$ as in Proposition \ref{prop3.1}. 

Let $M$ and $N$ be arbitrary finitely generated $\A_{m,N}$-modules. Since $\A_{m,N}$ is self-injective, it follows that for all $j\geq 1$, $\Ext_{\A_{m,N}}^j(M,N)=\SHom_{\A_{m,N}}(\Omega^jM,N)$, where $\SHom_{\A_{m,N}}(\Omega^jM,N)$ is the quotient of $\k$-vector spaces of $\Hom_{\A_{m,N}}(\Omega^jM,N)$ by the subspace consisting in those morphisms from $\Omega^j M$ to $N$ that factor through a finitely generated projective $\A_{m,N}$-module.  

For all $i\in \Z/m$, we define
\begin{align*}
\underline{c}_i=a_{i-1}\bar{a}_{i-2}^{-1}(\bar{a}_{i-2}a_{i-2})^{1-N}, && \underline{d}_i=(a_{i+1}\bar{a}_{i+1})^{N-1}a_{i+1}\bar{a}_i^{-1}, 
\end{align*}
and for all $n\geq 1$, define the {\bf string} representative
\begin{align}\label{hooks}
\underline{C}_{i,n}&=\underline{w}_1\underline{w}_2\cdots\underline{w}_n,
\end{align}
where $\mathbf{t}(\underline{C}_i)=i$ and for all $1\leq j \leq n$, $\underline{w}_j=\underline{c}_k$ for a suitable $k\in\Z/m$. We also define the {\bf string} representative
\begin{align}\label{cohooks}
\underline{D}_{i,n}&=\underline{v}_1\underline{v}_2\cdots\underline{v}_n,
\end{align}
where $\mathbf{s}(\underline{D}_i)=i$ and for all  $1\leq j\leq n$, $\underline{v}_j=\underline{d}_k$ for a suitable $k\in \Z/m$. For all $i\in \Z/m$, we let $\underline{C}_{i,0}=\1_i=\underline{D}_{i,0}$. Note e.g. that if $i\in \Z/m$ is fixed and $n=2$, then $\underline{C}_{i,2}= (\1_i)_{hh}$, and $\underline{D}_{i,2}={_{hh}(\1_i)}$ (see \S\ref{ape2}). 

\subsection{Components of $\Gamma_s(\A_{m,N})$ of type $\mathbb{ZA}_\infty^\infty$ containing a module whose endomorphism ring is isomorphic to $\k$}\label{sect4}

\subsubsection{Components containing a simple $\A_{m,N}$-module}

\begin{proposition}\label{prop4}
Let $i\in \Z/m$ be fixed and let $\mathfrak{A}_i$ be the component of $\Gamma_s(\A_{m,N})$  containing the simple $\A_{m,N}$-module $M[\1_i]$. Then the component $\mathfrak{A}_i$ is not $\Omega$-stable, and all the $\A_{m,N}$-modules $V$ in $\mathfrak{A}_i\cup \Omega \mathfrak{A}_i$ have stable endomorphism ring $\SEnd_{\A_{m,N}}(V)$ isomorphic to $\k$.  Their universal deformation rings $R(\A_{m,N},V)$ are also isomorphic to $\k$.
\end{proposition}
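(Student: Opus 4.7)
The approach is to combine three ingredients: the hook/co-hook parameterization of the modules in $\mathfrak{A}_i$ (from the Butler--Ringel theory recalled in \S\ref{appendix}); Krause's description of $\Hom$-spaces between string modules (\cite{krause}); and the identification $\Ext^1_{\A_{m,N}}(V,V)\cong \SHom_{\A_{m,N}}(\Omega V,V)$ available since $\A_{m,N}$ is self-injective. By Theorem \ref{thm3}(iii), once the claims about $\SEnd$ and $R(\A_{m,N},-)$ are established for $V\in\mathfrak{A}_i$, they automatically transfer to $\Omega\mathfrak{A}_i$, so it suffices to treat $V\in\mathfrak{A}_i$.

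I would begin by computing $\Omega M[\1_i]=\rad P_i$ directly from Figure \ref{projec}: it is the string $\A_{m,N}$-module whose string runs along the two branches of $P_i$ (omitting the top), starting at vertex $i+1$ and ending at vertex $i-1$, with explicit subwords of the form $(a_i\bar{a}_i)^{N-1}\bar{a}_i$ on one side and $a_{i-1}(\bar{a}_{i-1}a_{i-1})^{N-1}$ on the other. This string is not obtainable from $\1_i$ by iterated hooks and co-hooks, because its endpoints and zigzag pattern are incompatible with those produced by the hook/co-hook procedure applied to a trivial string; hence $\Omega M[\1_i]\notin\mathfrak{A}_i$, and $\mathfrak{A}_i$ is not $\Omega$-stable. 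Combined with Remark \ref{rem0} (so that $\Omega^2\mathfrak{A}_i=\mathfrak{A}_i$), this also shows $\mathfrak{A}_i\cup\Omega\mathfrak{A}_i$ is closed under $\Omega$.

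For any $V=M[S]\in\mathfrak{A}_i$, the string $S$ is produced from $\1_i$ by successive hook/co-hook operations. I would then invoke Krause's theorem to enumerate the canonical homomorphisms $M[S]\to M[S]$ as admissible factorizations $S=T_2UT_1$, and verify that every non-identity factorization passes through a substring of some indecomposable projective $\A_{m,N}$-module, hence factors through a projective; this forces $\SEnd_{\A_{m,N}}(V)\cong \k$, which by Theorem \ref{thm3}(i) guarantees the existence of $R(\A_{m,N},V)$. The same Krause-theoretic analysis, applied to morphisms $\Omega V\to V$ (with $\Omega V$ computed combinatorially from $S$ via the biserial structure), would show that every admissible factorization of such a morphism factors through a projective, giving $\SHom_{\A_{m,N}}(\Omega V,V)=0$ and hence $\Ext^1_{\A_{m,N}}(V,V)=0$. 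By the isomorphism (\ref{hoch}) the tangent space $\hat{\Fun}_V(\k[[t]]/(t^2))$ then vanishes, and since the versal ring is topologically generated by its tangent space one concludes $R(\A_{m,N},V)\cong \k$.

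The main obstacle will be the vanishing of $\SHom_{\A_{m,N}}(\Omega V,V)$: one must describe $\Omega V$ explicitly as a string module when $S$ is an arbitrarily long hook/co-hook iteration from $\1_i$, and then systematically check that every common admissible substring of $\Omega V$ and $V$ embeds in an indecomposable projective $\A_{m,N}$-module. This calls for an induction on the number of hook/co-hook extensions applied to $\1_i$, and it is the place where the specific symmetric biserial structure of $\A_{m,N}$ and the shape of the projectives in Figure \ref{projec} are used most crucially.
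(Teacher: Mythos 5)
Your overall strategy --- Krause's canonical homomorphisms together with the shape of the projectives in Figure \ref{projec} to control stable endomorphisms, the identification $\Ext^1_{\A_{m,N}}(V,V)\cong\SHom_{\A_{m,N}}(\Omega V,V)$, and vanishing of the tangent space to force $R(\A_{m,N},V)\cong\k$ --- is the same as the paper's. However, the step ``every non-identity factorization passes through a substring of some indecomposable projective $\A_{m,N}$-module, hence factors through a projective'' is not a valid inference, and it is precisely the crux of the $\SEnd_{\A_{m,N}}(V)\cong\k$ claim. A canonical endomorphism $\sigma_C$ as in (\ref{canhom}) whose middle term $M[C]$ has a string embedding into some $P_j$ (for instance a simple module $M[\1_j]$) need not factor through a projective: in the proof of Proposition \ref{prop5} the modules $V_{-n}$ admit canonical endomorphisms factoring through the simple module $M[\1_{i+1}]$ which do \emph{not} factor through any projective, and this is exactly why $\SEnd_{\A_{m,N}}(V_{-n})\not\cong\k$ there. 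Applied verbatim, your criterion would ``prove'' $\SEnd\cong\k$ for those modules as well. What is actually needed is either to rule out non-identity canonical endomorphisms for the strings in question, or to exhibit an explicit lifting of each such endomorphism through an indecomposable projective using the top-to-socle structure of $P_j$ in Figure \ref{projec}; this is the content the paper appeals to when it invokes \S\ref{appendix3} together with Figure \ref{projec}.

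Separately, you make the verification substantially heavier than necessary. You propose to treat an arbitrary $M[S]\in\mathfrak{A}_i$, i.e. an arbitrary iterated hook/co-hook extension of $\1_i$, and to compute $\Omega M[S]$ for all of these by induction, using Theorem \ref{thm3}(iii) only to pass from $\mathfrak{A}_i$ to $\Omega\mathfrak{A}_i$. The paper instead combines Remark \ref{rem0} ($\tau_{\A_{m,N}}=\Omega^2$) with Theorem \ref{thm3}(iii) to reduce at the level of $\Omega$-orbits: every module of $\mathfrak{A}_i\cup\Omega\mathfrak{A}_i$ lies in the $\Omega$-orbit of one of the explicit modules $V_n=M[\underline{D}_{i,n}]$ or $V_{-n}=M[\underline{C}_{i,n}]$ with $n\ge 0$, i.e. of strings obtained from $\1_i$ by adding hooks on one side only, as in (\ref{hooks}) and (\ref{cohooks}). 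Since both $\SEnd(-)\cong\k$ and the (uni)versal deformation ring are $\Omega$-invariant, it then suffices to check $\SEnd_{\A_{m,N}}(V_n)\cong\k$ and $\SHom_{\A_{m,N}}(\Omega V_n,V_n)=0$ for these concrete strings, which avoids describing the syzygy of a general member of the component. Your induction could in principle be completed once the factoring-through-projectives step is repaired, but it duplicates exactly the computations that the $\Omega$-invariance is designed to eliminate. Your computation that $\Omega M[\1_i]=\rad P_i$ and the resulting non-$\Omega$-stability of $\mathfrak{A}_i$ agree with the paper, which records this syzygy as $M[{_c}(\bar{a}_{i-1}a_{i-1})^{N-1}]$.
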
   
\begin{proof}
Let $V_0=M[\1_i]$. Then $\Omega V_0 = M[{_c}(\bar{a}_{i-1}a_{i-1})^{N-1}]$, where ${_c}(\bar{a}_{i-1}a_{i-1})^{N-1}$ is the left co-hook of $(\bar{a}_{i-1}a_{i-1})^{N-1}$ as in \S\ref{ape2}. This implies that $\Omega \mathfrak{A}_i\not=\mathfrak{A}_i$. On the other hand, for all $n\geq 1$, let $V_n=M[\underline{D}_{i,n}]$ and $V_{-n}=M[\underline{C}_{i,n}]$. It follows from Remark \ref{rem0} and \S\ref{ape2} that every string $\A_{m,N}$-module belonging to  $\mathfrak{A}_i\cup\Omega\mathfrak{A}_i$ lies in the $\Omega$-orbit of $V_n$ for some $n\in \Z$. 
Using the description of the canonical morphisms in \S\ref{appendix3} and the shape of the indecomposable $\A_{m,N}$-modules as in Figure \ref{projec} it follows that for all $n\in\Z$, $\SEnd_{\A_{m,N}}(V_n)\cong \k$, which implies that $V_n$ has a universal deformation ring $R(\A_{m,N}, V_n)$. Since it is straightforward to show by using \S\ref{appendix3} that $\Ext_{\A_{m,N}}^1(V_n,V_n)=\SHom_{\A_{m,N}}(\Omega V_n, V_n)=0$, we obtain that $R(\A_{m,N}, V_n)=\k$ for all $n\in \Z$. This finishes the proof of Proposition \ref{prop4}.
\end{proof}
\begin{remark}
If $m=3$ and $N=1$, then Proposition \ref{prop4} recovers some of the results in \cite[Prop. 3.1(iii)]{bleher9} for the algebra $\A_{3,1}=D(3\mathcal{K})^{1,1,1}$. 
\end{remark}

\subsubsection{Components containing a string $\A_{m,N}$-module of the form $M[a_i]$ or $M[\bar{a}_i]$}\label{ai}
\renewcommand{\qedsymbol}{}

Let $\kappa_m$ be the non-negative integer defined as follows: 

\begin{equation}\label{kappam}
\kappa_m=\begin{cases} \frac{m}{2}, &\text{ if $m$ is even,}\\
\frac{m-1}{2}, &\text{ otherwise.}
\end{cases}
\end{equation} 

\begin{remark}\label{rem42}
If $N=1$, then for all $i\in \Z/m$, the strings $a_i$ and $\bar{a}_i$ for $\A_{m,1}$ are maximal. Thus the respective components of $\Gamma_s(\A_{m,1})$ containing the string $\A_{m,1}$-modules $M[a_i]$ and $M[\bar{a}_i]$ with $i\in \Z/m$ are tubes, which will be discussed in Proposition \ref{prop6}. Thus, for the remainder of \S\ref{sect4}, we assume that $N\geq 2$.
\end{remark}

\begin{proposition}\label{prop5}
Let $i\in \Z/m$ be fixed. Let $\mathfrak{B}_i$ be the component of $\Gamma_s(\A_{m,N})$  containing the $\A_{m,N}$-module $M[\zeta_i]$ with $\zeta_i\in \{a_i,\bar{a}_i\}$. 
Then the component $\mathfrak{B}_i$ is not $\Omega$-stable unless $N=2$, and there are exactly $\kappa_m$ $\Omega$-orbits of $\A_{m,N}$-modules in $\mathfrak{B}_i\cup \Omega\mathfrak{B}_i$ whose stable endomorphism ring is isomorphic to $\k$. More precisely,  the modules in $\mathfrak{B}_i\cup \Omega\mathfrak{B}_i$ that have stable endomorphism ring is isomorphic to $\k$ are the modules in the $\Omega$-orbits of $V_n=M[\underline{D}_{i+1,n}\zeta_i]$ with $0\leq n\leq \kappa_m-1$. The universal deformation rings $R(\A_{m,N},V_n)$ are isomorphic to $\k[[t]]/(t^N)$ when $n=0$, and to $\k$ when $0<n<\kappa_m-1$. If $m$ is odd, then $R(\A_{m,N}, V_{\kappa_m-1})$ is isomorphic to $\k$. If $m$ is even, then $R(\A_{m,N}, V_{\kappa_m-1})$ is isomorphic to $\k[[t]]$.
\end{proposition}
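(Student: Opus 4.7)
The approach parallels the proof of Proposition \ref{prop4}: I will identify all modules in $\mathfrak{B}_i \cup \Omega\mathfrak{B}_i$ with stable endomorphism ring $\k$, compute their $\Ext^1$'s via Krause's description in \S\ref{appendix3}, and then apply Theorem \ref{thm3} together with explicit lift constructions to pin down the universal deformation rings. The first step is to compute $\Omega M[\zeta_i]$ using the projective cover (either $P_{i+1}$ or $P_i$, depending on whether $\zeta_i=\bar{a}_i$ or $a_i$) whose biserial structure is given in Figure \ref{projec}: the syzygy is a string module of string length $4N-3$, corresponding to the ``complement'' of $\zeta_i$ in the projective, namely the string running down one branch through the socle and back up the other, stopping one step below the top. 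Iterating and invoking Remark \ref{rem0} so that $\tau=\Omega^2$ gives the $\tau$-orbit structure of $\mathfrak{B}_i$; since $\mathfrak{B}_i$ contains neither a simple nor a projective-injective module, it must be of type $\mathbb{ZA}_\infty^\infty$. For $\Omega$-stability, matching the syzygy string length $4N-3$ against the string lengths attainable in $\mathfrak{B}_i$ by adjoining hooks or co-hooks to $M[\zeta_i]$ forces $N=2$; for $N>2$ the syzygy lands in the disjoint component $\Omega\mathfrak{B}_i$.

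Next, I identify the candidates for modules in $\mathfrak{B}_i\cup\Omega\mathfrak{B}_i$ with $\SEnd\cong\k$. By Proposition \ref{prop3.1}, string modules with endomorphism ring $\k$ correspond to substrings of some $Z_j$ or to strings $W_j^{(l)}$. Since the modules in $\mathfrak{B}_i$ are obtained from $M[\zeta_i]$ by iteratively adjoining hooks and co-hooks, the candidates must be of the form $V_n=M[\underline{D}_{i+1,n}\zeta_i]$ for $n\geq 0$. The upper bound $n\leq \kappa_m-1$ is forced by the prohibition against substrings of the form $(\bar{a}_k a_k)^l$ or $(a_k\bar{a}_k)^l$: an explicit check of the indices produced by the co-hook sequence defining $\underline{D}_{i+1,n}$ shows that such a forbidden substring first appears at the interface with $\zeta_i$ precisely when $n=\kappa_m$, due to the traversal of $\Z/m$. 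This exhibits exactly $\kappa_m$ distinct $\Omega$-orbits, and Theorem \ref{thm3}(i) supplies a universal deformation ring $R(\A_{m,N},V_n)$ for each.

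For each $V_n$ I compute $\Ext^1_{\A_{m,N}}(V_n,V_n)\cong\SHom(\Omega V_n,V_n)$ by enumerating the canonical homomorphisms $\Omega V_n\to V_n$ in the sense of \S\ref{appendix3} and quotienting by those that factor through a projective $\A_{m,N}$-module. For $0<n<\kappa_m-1$ the canonical maps all factor through projectives, yielding $\Ext^1=0$ and hence $R(\A_{m,N},V_n)\cong\k$ via (\ref{hoch}); the same analysis handles $V_{\kappa_m-1}$ when $m$ is odd. For $V_0$, and for $V_{\kappa_m-1}$ when $m$ is even, the computation yields $\dim_\k\Ext^1=1$, so the corresponding universal deformation ring is a (proper or improper) quotient of $\k[[t]]$.

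To pin down the quotient in these two cases, I construct explicit lifts. For $V_0=M[\zeta_i]$ (say $\zeta_i=\bar{a}_i$), the lift over $\k[[t]]/(t^N)$ is the rank-2 free module in which the original action $\bar{a}_i w_0=w_1$ is kept and the previously-zero arrow $a_i$ is deformed to $a_i w_1=t\cdot w_0$; a direct verification using the defining relations shows that the quadratic relations hold automatically, while the $N$-th power relation $(\bar{a}_i a_i)^N=(a_{i-1}\bar{a}_{i-1})^N$ forces $t^N=0$, so the deformation is truncated at order $N$ and no lift exists over $\k[[t]]/(t^{N+1})$. For $V_{\kappa_m-1}$ with $m$ even, the analogous deformation of the longer wrap-around string $\underline{D}_{i+1,\kappa_m-1}\zeta_i$ no longer triggers the $N$-th power relation at the critical location, so the lift extends over all of $\k[[t]]$ without obstruction. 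The main obstacle is this final step: one must track how the defining relations of $\A_{m,N}$ interact with the proposed deformations to verify both that the explicit lifts realize the full deformation functor (no further lift is possible in the truncated case) and that the resulting rings cannot be replaced by proper quotients (a consequence of $\dim_\k\Ext^1=1$ together with non-triviality of the first-order deformation).
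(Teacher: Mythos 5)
Your overall strategy (identify the $\SEnd\cong\k$ modules, compute $\Ext^1$, then build explicit lifts) is the same as the paper's, but two steps as written do not go through. First, the identification step conflates $\End_{\A_{m,N}}\cong\k$ with $\SEnd_{\A_{m,N}}\cong\k$: Proposition \ref{prop3.1} classifies modules with \emph{ordinary} endomorphism ring $\k$, and for $N\geq 2$ the modules $V_n=M[\underline{D}_{i+1,n}\zeta_i]$ with $n\geq 1$ fail that criterion (their strings contain $(a_k\bar{a}_k)^{N-1}$ already for $n=1$), even though their \emph{stable} endomorphism rings are $\k$ because the extra canonical endomorphisms factor through projectives. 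Consequently your claimed mechanism for the cutoff --- that a ``forbidden substring'' first appears at $n=\kappa_m$ --- is false; such substrings appear at $n=1$, and the genuine reason $\SEnd(V_n)\not\cong\k$ for $n\geq\kappa_m$ is the appearance, after the co-hook chain wraps around $\Z/m$, of a canonical endomorphism that does \emph{not} factor through a projective (this is the computation the paper actually performs). Moreover, you never examine the other half of the component: every module of $\mathfrak{B}_i\cup\Omega\mathfrak{B}_i$ lies in the $\Omega$-orbit of some $V_n$ with $n\in\Z$, where $V_{-n}=M[\zeta_i\underline{C}_{i,n}]$; the paper shows that for $N\geq 3$ these have a canonical endomorphism through $M[\1_{i+1}]$ not factoring through a projective (so $\SEnd\not\cong\k$), and that for $N=2$ they fall into the $\Omega$-orbits of the $V_n$ with $n\geq 0$. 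Without handling the $V_{-n}$, the assertion ``exactly $\kappa_m$ $\Omega$-orbits'' is not established.

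Second, the final determination of the rings is asserted rather than proved, as you yourself note. Showing that your explicit rank-two formula (with $a_i$ acting by $t$) does not define a module over $\k[[t]]/(t^{N+1})$ only rules out that one naive extension; since $\dim_\k\Ext^1(V_0,V_0)=1$ you know $R(\A_{m,N},V_0)$ is a quotient of $\k[[t]]$ surjecting onto $\k[[t]]/(t^N)$, but to exclude $\k[[t]]/(t^{r})$ with $r>N$ (and $\k[[t]]$) you must show that \emph{no} lift over $\k[[t]]/(t^{N+1})$ induces the universal mod-$t^N$ deformation. The paper's Claim \ref{claim1} does exactly this: assuming such a lift $M_0$ exists, it uses $\Ext^1_{\A_{m,N}}(M[(a_i\bar{a}_i)^{N-1}a_i],V_0)=\SHom_{\A_{m,N}}(M[(\bar{a}_{i-1}a_{i-1})^{N-1}\bar{a}_{i-1}],V_0)=0$ to split $0\to V_0\to M_0\to M[(a_i\bar{a}_i)^{N-1}a_i]\to 0$ as $\A_{m,N}$-modules and then shows the $t$-action forces $t^NM_0=0$, contradicting $t^NM_0\cong V_0$. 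Dually, for $V_{\kappa_m-1}$ with $m$ even, ``the relation is not triggered'' is not a proof that $R\cong\k[[t]]$: one must actually exhibit a lift over $\k[[t]]$ (the paper concatenates copies of the string $W_i$ along $\bar{a}_{i-1}^{-1}$ and takes an inverse limit in Claim \ref{claim2}) to get a surjection $R\to\k[[t]]$. These missing arguments are the substance of the proof, so the proposal has genuine gaps at both the classification and the ring-identification stages.
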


\begin{proof}
Let $V_0=M[a_i]$. By using the shape of the indecomposable $\A_{m,N}$-modules as in Figure \ref{projec}, we obtain that $\Omega V_0=\Omega M[a_i]= M[{_c}(a_i(\bar{a}_ia_i)^{N-2})]$, where ${_c}(a_i(\bar{a}_ia_i)^{N-2})$ is the left co-hook of the string $a_i(\bar{a}_ia_i)^{N-2}$ as in \S\ref{ape2}. This implies that $\mathfrak{B}_i$ is not $\Omega$-stable unless $N=2$. For all $n\geq 1$, let $V_n=M[\underline{D}_{i+1,n}a_i]$ and $V_{-n}=M[a_i\underline{C}_{i,n}]$.  Note that for example that if $n=2$, then $V_2=M[{_{hh}(a_i)}]$ and $V_{-2}=M[(a_i)_{hh}]$. 
As before, it follows from Remark \ref{rem1} and \S\ref{ape2} that every string $\A_{m,N}$-module belonging to  $\mathfrak{B}_i\cup\Omega\mathfrak{B}_u$ lies in the $\Omega$-orbit of $V_n$ for some $n\in \Z$. If $N=2$, then for all $n\geq 1,$ the $\A_2$-module $V_{-n}$ lies in the $\Omega$-orbit of $V_{k_n}$ for some integer $k_n\geq 0$. If $N\geq 3$ and $n\geq 1$, it follows from the description of the canonical morphisms in \S\ref{appendix3} and the shape of the indecomposable $\A_{m,N}$-modules as in Figure \ref{projec} that the $\A_{m,N}$-module $V_{-n}$ has a canonical endomorphism as in (\ref{canhom}) factoring through the simple $\A_{m,N}$-module $M[\1_{i+1}]$, and which does not factor through a projective $\A_{m,N}$-module. Therefore in this situation we have $\SEnd_{\A_{m,N}}(V_{-n})\not\cong \k$. On the other hand, it is straightforward to show that for all $N\geq 2$, $\SEnd_{\A_{m,N}}(V_n)\cong \k$ for $1\leq n\leq \kappa_m-1$. Therefore if $N\geq 2$ and $0\leq n\leq \kappa_m-1$, then $V_n$ has a universal deformation ring $R(\A_{m,N}, V_n)$. On the other hand, we have that $\SEnd_{\A_{m,N}}(V_n)\not\cong \k$ for $n\geq \kappa_m$, for in this situation $V_n$ has a non-trivial canonical endomorphism as in (\ref{canhom}) that factors through either a simple $\A_{m,N}$-module or a string $\A_{m,N}$-module whose string representative has string length $1$, and which does not factor through a projective $\A_{m,N}$-module. 
Assume that $n=0$. Then there is a unique canonical morphism in $\Hom_{\A_{m,N}}(\Omega V_0, V_0)$ that factors through $M[a_i]$ which does not factor through a projective $\A_{m,N}$-module. Since 
$\dim_\k\Hom_{\A_{m,N}}(\Omega V_0, V_0)=1,$
it follows that $\Ext_{\A_{m,N}}^1(V_0,V_0) \cong \SHom_{\A_{m,N}}(\Omega V_0, V_0)\cong \k$. Therefore, $R(\A_{m,N}, V_0)$ is a quotient of $\k[[t]]$. 

\begin{claim}\label{claim1}
$R(\A_{m,N}, V_0)=R(\A_{m,N}, M[a_i])$ is isomorphic to  $\k[[t]]/(t^N)$.
\end{claim}

\begin{proof}
For all $0\leq l\leq N-1$, let $S_l=(a_i\bar{a}_i)^la_i$. Let  $l\in\{1,\ldots,N\}$ be fixed. Then there exists a non-trivial canonical endomorphism $\sigma_l$ of the $\A_{m,N}$-module $M[S_l]$ as in (\ref{canhom}) that factors through $M[S_{l-1}]$, namely
\begin{equation}\label{sigma1}
\sigma_l: M[S_l]\surjection M[S_{l-1}]\injection M[S_l]. 
\end{equation}
Observe that  the kernel of $\sigma_l$ as well as the image of $\sigma_l^{l-1}$ are isomorphic to $V_0=M[a_i]$, and that $\sigma_l^l=0$. Thus, the $\A_{m,N}$-module $M[S_l]$ is naturally a $\k[[t]]/(t^{l+1})\otimes_\k\A_{m,N}$-module, where the action of $t$ on $x\in M[S_l]$ is given as $t\cdot x=\sigma_l(x)$. In particular, $tM[S_l]\cong M[S_{l-1}]$. 

Let  $\{\bar{r}_1, \bar{r}_2\}$ be a $\k$-basis of $V_0$. Using the 
isomorphism $M[S_l]/tM[S_l]\cong V_0$, we can lift the elements $\bar{r}_1$ and $\bar{r}_2$ to corresponding elements $r_1, r_2 \in M[S_l]$. It follows that $\{r_1, r_2\}$ is linearly independent over $\k$ and that $\{t^sr_1, t^sr_2: 1\leq 
s\leq l\}$ is a $\k$-basis of $tM[S_l]\cong M[S_{l-1}]$. Therefore, $\{r_1, r_2\}$ is a $\k[[t]]/(t^{l+1})$-basis of $M[S_l]$, i.e., $M[S_l]$ is free over $\k[[t]]/(t^{l+1})$. Observe that $M[S_l]$ lies in a short exact sequence of $\A_{m,N}$-modules 

\begin{equation*}
0\to tM[S_l]\to M[S_l]\to \k\otimes_{\k[[t]]/(t^{l+1})}M[S_l]\to 0.
\end{equation*}

Then there exists an isomorphism of $\A_{m,N}$-modules $\phi_l:\k\otimes_{\k[[t]]/(t^{l+1})}M[S_l]\to V_0$, which implies that $(M[S_l],\phi_l)$ is a lift of $V_0$ over $k[[t]]/(t^{l+1})$. Consider the lift $(M[S_{N-1}],\phi_{N-1})$ of $V_0$ over $\k[[t]]/(t^N)$. Since $\End_{\A_{m,N}}(V_0)\cong \k$, it follows from Theorem \ref{thm3}(i) that there exists a unique morphism $\alpha:R(\A_{m,N},V_0)\to \k[[t]]/(t^N)$ in 
$\hat{\Ca}$ such that 
\begin{equation*}
M[S_{N-1}]\cong \k[[t]]/(t^N)\otimes_{R(\A_{m,N},V_0),\alpha} U(\A_{m,N},V_0),
\end{equation*} 
where $R(\A_{m,N}, V_0)$ and $[U(\A_{m,N},V_0), \phi_{U(\A_{m,N},V_0)}]$ are respectively the universal deformation ring and the universal deformation of the $\A_{m,N}$-module $V_0$. Since $(M[S_1],\phi_1)$ is not the trivial lift of $V_0$ over $\k[[t]]/(t^2)$, it follows that there exists 
a unique surjective morphism $\alpha': R(\A_{m,N},V_0)\to \k[[t]]/(t^2)$ in $\hat{\Ca}$ such that
\begin{equation*}
M[S_1]\cong k[[t]]/(t^2)\otimes_{R(\A_{m,N},V_0),\alpha'}U(\A_{m,N},V_0). 
\end{equation*}

By considering the natural projection $\pi_{N,2}: \k[[t]]/(t^N)\to \k[[t]]/(t^2)$ and the lift $(U',\phi_{U'})$  of $V_0$ over $\k[[t]]/(t^2)$ corresponding to the morphism $\pi_{N,2}\circ \alpha$, we obtain 
\begin{align*}
U'&\cong \k[[t]]/(t^2)\otimes_{R(\A_{m,N},V_0),\pi_{N,2}\circ \alpha}U(\A_{m,N},V_0)\\
&\cong \k[[t]]/(t^2)\otimes_{\k[[t]]/(t^N),\pi_{N,2}}\left(\k[[t]]/(t^N)\otimes_{R(\A_{m,N},V_0),\alpha}U(\A_{m,N},V_0)\right)\\
&\cong \k[[t]]/(t^2)\otimes_{\k[[t]]/(t^N),\pi_{N,2}}M[S_{N-1}]\\
&\cong M[S_{N-1}]/t^2M[S_{N-1}]\cong M[S_1]. 
\end{align*} 
It follows from Remark \ref{rem1} that $[U',\phi_{U'}]=[M[S_1],\phi_1]$ in $\hat{\Fun}_{V_0}(\k[[t]]/(t^2))$. The uniqueness of $\alpha'$ implies that $\alpha'= \pi_{N,2}\circ \alpha$. 

Since $\alpha'$ is surjective, $\alpha$ is also surjective.  We want to prove that $\alpha$ is an isomorphism. Suppose this is false.  Then there exists a surjective $\k$-algebra homomorphism $\alpha_0: R(\A_{m,N},V_0)\to \k[[t]]/(t^{N+1})$ in $\hat{\Ca}$ such that $\pi_{N+1,N}\circ \alpha_0=\alpha$, where $\pi_{N+1,N}:\k[[t]]/(t^{N+1})\to \k[[t]]/(t^{N})$ is the natural projection. Let $M_0$ be a $\k[[t]]/(t^{N+1})\otimes_\k\A_{m,N}$-module, which defines the lift of $V_0$ over $\k[[t]]/(t^{N+1})$ corresponding to $\alpha_0$. Since the kernel of 
$\pi_{N+1,N}$ is $(t^N)/(t^{N+1})$, then $M_0/t^{N}M_0\cong M[S_{N-1}]$. Consider the $\k[[t]]/(t^{N+1})\otimes_\k\A_{m,N}$-module homomorphism $g:M_0\to t^NM_0$ defined by $g(x)=t^Nx$ for all $x\in M_0$. Since $M_0$ is free over $\k[[t]]/(t^{N+1})$, then the kernel of $g$ is isomorphic to $tM_0$. Thus, $M_0/tM_0\cong t^NM_0$ for $g$ is a surjection. Therefore $t^NM_0\cong V_0$, which implies that there exists a non-split short exact sequence of $\k[[t]]/(t^{N+1})\otimes_\k \A_{m,N}$-modules 
\begin{equation}\label{seq1}
0\to V_0\to M_0\to M[S_{N-1}]\to 0.
\end{equation}

Since $\Omega M[S_{N-1}] = \Omega M[(a_i\bar{a}_i)^{N-1}a_i]=M[(\bar{a}_{i-1}a_{i-1})^{N-1}\bar{a}_{i-1}]$, then 

\begin{equation*}
\Ext_{\A_{m,N}}^1(M[S_{N-1}],V_0)=\underline{\Hom}_{\A_{m,N}}(\Omega M[S_{N-1}],V_0)=0.
\end{equation*}

It follows that the sequence (\ref{seq1}) splits as a sequence of $\A_{m,N}$-modules. Hence, $M_0=V_0\oplus M[S_{N-1}]$ as $\A_{m,N}$-modules. Identifying the elements of $M_0$ as $(v,x)$ with $v\in V_0$ and $x\in M[S_{N-1}]$, we see that the $t$ acts on $(v,x)\in M_0$ as $t\cdot(v,x)=(\mu(x),\sigma_{N-1}(x))$, where $\mu: M[S_{N-1}]\to V_0$ is a surjective $\A_{m,N}$-module homomorphism and $\sigma_{N-1}$ is as in (\ref{sigma1}). Since the canonical homomorphism $\epsilon: M[S_{N-1}] \surjection M[a_i]$ generates $\Hom_{\A_{m,N}}(M[S_{N-1}],V_0)$, then there exists $c\in \k^\ast$ such that $\mu=c\epsilon$, which implies that the kernel of $\mu$ is $tM[S_{N-1}]$. Therefore, $t^N(v,x)=(\mu(t^{N-1}x),\sigma_{N-1}^N(x))=(0,0)$ for all $v\in V_0$ and $x\in M[S_{N-1}]$, which contradicts the fact that $t^NM_0\cong V_0$. Thus $\alpha: R(\A_{m,N},V_0) \to \k[[t]]/(t^N)$ is an isomorphism 
and $R(\A_{m,N},M[a_i])=R(\A_{m,N},V_0)\cong \k[[t]]/(t^N)$.  This finishes the proof of Claim \ref{claim1}.
\end{proof}
Assume now that $1\leq n<\kappa_m-1$. Then it is straightforward to show that  $\Ext_{\A_{m,N}}^1(V_n,V_n)=0$, which implies that $R(\A_{m,N}, V_n)\cong \k$.
Finally assume that $n=\kappa_m-1$. If $m$ is odd, then $\Ext_{\A_{m,N}}^1(V_{\kappa_m-1},V_{\kappa_m-1})=0$, which implies that $R(\A_{m,N}, V_{\kappa_m-1})\cong \k$. Assume next that $m$ is even. Then $\Hom_{\A_{m,N}}(\Omega V_{\kappa_m-1}, V_{\kappa_m-1})$ has only a non-trivial canonical morphism as in (\ref{canhom}) which does not factor through a projective $\A_{m,N}$-module. Therefore 
\begin{equation*}
\Ext_{\A_{m,N}}^1(V_{\kappa_m-1},V_{\kappa_m-1})=\SHom_{\A_{m,N}}(\Omega V_{\kappa_m-1}, V_{\kappa_m-1})=\k.
\end{equation*}
This implies that $R(\A_{m,N}, V_{\kappa_m-1})$ is isomorphic to a quotient of $\k[[t]]$.

\begin{claim}\label{claim2}
If $m$ is even then $R(\A_{m,N},V_{\kappa_m-1})$ is isomorphic to $\k[[t]]$.
\end{claim}
\begin{proof}
Let $W_i= \underline{D}_{i+1,\kappa_m-1}a_i$. Note that $\mathbf{s}(W_i)=i$, $\mathbf{t}(W_i)=i-1$ and $V_{\kappa_m-1}=M[W_i]$. Let $T_0=W_i$ and for all $l\geq 1$, let $T_l$ be the string 
\begin{align*}
T_l =T_{l-1}\bar{a}_{i-1}^{-1}W_i
\end{align*}
By using similar arguments as  
in the proof of Claim \ref{claim1}, for each $l\geq 1$ we get lifts $(M[T_l], \varphi_l)$ of $M[W_i]$ over $\k[[t]]/(t^{l+1})$, where $t$ acts on $x\in M[T_l]$ as $t\cdot x =\delta_l(x)$ and where $\delta_l$ is the non-trivial canonical endomorphism of $M[T_l]$ that factors through $M[T_{l-1}]$, namely 

\begin{equation}\label{sigma2}
\delta_l: M[T_l]\surjection M[T_{l-1}]\injection M[T_l].
\end{equation}

Note that for all $l\geq 1$, we have natural projections $\pi_{l,l-1}: M[T_l]\to M[T_{l-1}]$. Let $N_0=\invlim M[T_l]$ and let $t$ act on $N_0$ as $\invlim \pi_{l,l-1}$. In particular, $N_0$ is a $\k[[t]]\otimes_\k\A_{m,N}$-module and $\k
\otimes_{\k[[t]]}N_0\cong N_0/tN_0\cong M[W_i]$, which implies that there exists an isomorphism of $\A_{m,N}$-modules $\varphi_0:\k\otimes_{\k[[t]]}N_0\to M[W_i]$. Let $k=\dim_\k M[W_i]$ and let $\{\bar{b}_j\}_{1\leq j\leq k}$ be a $\k$-basis of $N_0/tN_0$. For all $1\leq j\leq k$, we are able to lift these elements $\bar{b}_j$ in $N_0/tN_0$ to elements $b_j$ of 
$N_0$ such that $\{b_j\}_{1\leq j\leq k}$ is a generating set of the $\k[[t]]\otimes_\k\A_{m,N}$-module $N_0$. It follows that $\{b_j\}_{1\leq j\leq n}$ is a $\k[[t]]$-basis 
of $N_0$, i.e., $N_0$ is free over $\k[[t]]$. Therefore, $(N_0,\varphi_0)$ is a lift of $M[W_i]$ over $\k[[t]]$ and there exists a unique $\k$-algebra homomorphism $\beta:R(\A_{m,N},M[W_i])\to \k[[t]]$ in $\hat{\Ca}$ corresponding to the deformation defined by $(N_0,\varphi_0)$.
Since $N_0/t^2N_0\cong M[T_1]$ as $\A_{m,N}$-modules, we can see, as in the proof of Claim \ref{claim1}, that $N_0/t^2N_0$ defines a non-trivial lift of $M[W_i]$ over $\k[[t]]/(t^2)$ and that $\beta$ is a surjection. Since $R(\A_{m,N},M[W_i])$ is a quotient of $\k[[t]]$, it follows that $\beta$ is an isomorphism. Hence $R(\A_{m,N},V_{\kappa_m-1})=R(\A_{m,N},M[W_i])\cong \k[[t]]$. This finishes the proof of Claim \ref{claim2}.
\end{proof}

Note that the above results concerning the string $\A_{m,N}$-module $M[a_i]$ can be adjusted to obtain those for $M[\bar{a}_i]$. In particular, $R(\A_{m,N}, M[\bar{a}_i])\cong \k[[t]]/(t^N)$. This finishes the proof of Proposition \ref{prop5}. 
\renewcommand{\qedsymbol}{$\Box$}
\end{proof}   

\subsubsection{Components containing a string $\A_{m,N}$-module whose endomorphism ring is isomorphic to $\k$, and whose string representative have string length greater than one.}   

For all integers $l\geq 1$ and $i\in \Z/m$, define inductively the {\bf string} representative $\Theta_{i,l}$ for $\A_{m,N}$ as follows: 
\begin{align}\label{Thetastrings}
\Theta_{i,1}&\in \{\bar{a}_{i+1}^{-1}a_i, a_{i+1}\bar{a}_i^{-1}\},\\\notag
\Theta_{i,l+1}&= \Theta_{\mathbf{t}(\Theta_{i,l}),1}\Theta_{i,l}, \text{ for $l\geq 1$.}
\end{align}

\begin{proposition}\label{prop7}
Let $S$ be a string representative for $\A_{m,N}$ with string length greater than one, and which is equivalent either to a substring of $Z_i$ or to $W_i^{(n)}$ as in (\ref{stringW}) for some $i\in \Z/m$ and $n\geq 0$. Let $\mathfrak{C}$ be the component of $\Gamma_s(\A_{m,N})$ containing the string $\A_{m,N}$-module $M[S]$. 
\begin{enumerate}
\item If $S$ is string equivalent to $\Theta_{j,l}$ (as in  (\ref{Thetastrings})) for some $j\in \Z/m$ and $l\geq 1$, then every $\A_{m,N}$-module $V$ in $\mathfrak{C}\cup\Omega\mathfrak{C}$ has stable endomorphism ring isomorphic to $\k$.  In this situation, the universal deformation ring $R(\A_{m,N},V)$ is isomorphic to $\k$.
\item If $S$ is not string equivalent to $\Theta_{j,l}$ for all $j\in \Z/m$ and $l\geq 1$, then there are exactly $2\kappa_m$ $\Omega^2$-orbits of $\A_{m,N}$-modules $V$ in $\mathfrak{C}\cup\Omega\mathfrak{C}$ that have stable endomorphism ring isomorphic to $\k$.  In this situation, the universal deformation ring $R(\A_{m,N},V)$ is isomorphic either to $\k$ or to $\k[[t]]$.
\end{enumerate}
\end{proposition}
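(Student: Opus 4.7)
The plan is to adapt the proofs of Propositions \ref{prop4} and \ref{prop5}. By Remark \ref{rem5} and Corollary \ref{cor3.4}, the component $\mathfrak{C}$ has type $\mathbb{Z}A_\infty^\infty$, so every module in $\mathfrak{C}\cup\Omega\mathfrak{C}$ lies in the $\Omega^2$-orbit of a string module of the form $V_{n,s}=M[\underline{D}_{\mathbf{s}(S)+1,n}\,S\,\underline{C}_{\mathbf{t}(S),s}]$ with $n,s\geq 0$, where $\underline{C}_{i,n}$ and $\underline{D}_{i,n}$ are the hook and co-hook strings defined in (\ref{hooks}) and (\ref{cohooks}). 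For each such $V_{n,s}$, I would use Krause's description of canonical morphisms from \S\ref{appendix3}, together with the radical structure of Figure \ref{projec}, to determine $\SEnd_{\A_{m,N}}(V_{n,s})$ and $\SHom_{\A_{m,N}}(\Omega V_{n,s}, V_{n,s})=\Ext^1_{\A_{m,N}}(V_{n,s},V_{n,s})$.

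For part (i), where $S\sim\Theta_{j,l}$ is a pure zigzag, the concatenation forming $V_{n,s}$ preserves the zigzag pattern at each junction, so the only non-identity canonical endomorphisms of $V_{n,s}$ factor through $M[\1_k]$, $M[a_k]$ or $M[\bar{a}_k]$ via a projective $\A_{m,N}$-module; the same analysis applied to $\Omega V_{n,s}$ yields $\SHom_{\A_{m,N}}(\Omega V_{n,s}, V_{n,s})=0$. Hence $\SEnd_{\A_{m,N}}(V_{n,s})\cong\k$ and $R(\A_{m,N},V_{n,s})\cong\k$ by the isomorphism (\ref{hoch}).

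For part (ii), observe that the $\Theta_{j,l}$ all have even string length $2l$, so the strings in Proposition \ref{prop3.1} that are \emph{not} equivalent to any $\Theta_{j,l}$ are precisely the odd-length substrings of $Z_i$ of length at least $3$; such an $S$ begins and ends with letters of the same type. This asymmetry forces non-projective canonical endomorphisms once enough hooks or co-hooks are added on either side, and a case analysis parallel to that in Proposition \ref{prop5} should give $\SEnd_{\A_{m,N}}(V_{n,s})\cong\k$ precisely for $0\leq n,s\leq \kappa_m-1$. Since the left and right extensions contribute independently, collecting these representatives into $\Omega^2$-orbits should produce exactly $2\kappa_m$ orbits with stable endomorphism ring isomorphic to $\k$. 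For each such representative, $\SHom_{\A_{m,N}}(\Omega V_{n,s}, V_{n,s})$ is either $0$ (giving $R(\A_{m,N},V_{n,s})\cong\k$ via (\ref{hoch})) or $\k$ (giving a quotient of $\k[[t]]$). In the latter (boundary) case I would reproduce the inverse-limit argument of Claim \ref{claim2}: inductively define strings $T_l=T_{l-1}\,\xi\,W$ that glue copies of $W=V_{n,s}$ through a suitable bridge letter $\xi$ determined by the endpoints of $W$, verify that each $M[T_l]$ is free over $\k[[t]]/(t^{l+1})$ with $t$-action given by the canonical endomorphism factoring through $M[T_{l-1}]$, and take $N_0=\invlim M[T_l]$ as a lift of $V_{n,s}$ over $\k[[t]]$; the universality argument from Claim \ref{claim2} then forces $R(\A_{m,N},V_{n,s})\cong \k[[t]]$.

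The main obstacle will be the combinatorics of (ii): pinning down the exact range of $(n,s)$ for which $\SEnd_{\A_{m,N}}(V_{n,s})\cong\k$, organizing these representatives into $\Omega^2$-orbits and verifying the count $2\kappa_m$, and singling out the boundary configurations (depending on the parity of $m$ and on how the zigzag of $S$ sits in $Q$) on which $\Ext^1_{\A_{m,N}}(V_{n,s},V_{n,s})\cong\k$. Once this combinatorial skeleton is in place, the $\k[[t]]$ case follows verbatim from Claim \ref{claim2}, and the $\k$ case is immediate from the isomorphism (\ref{hoch}).
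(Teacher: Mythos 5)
Your deformation-theoretic core is the paper's: compute $\Ext^1_{\A_{m,N}}(V,V)=\SHom_{\A_{m,N}}(\Omega V,V)$ from Krause's canonical morphisms and Figure \ref{projec}, conclude $R(\A_{m,N},V)\cong\k$ via (\ref{hoch}) when this vanishes, and in the boundary case with $\Ext^1\cong\k$ repeat the inverse-limit construction of Claim \ref{claim2} to force $R(\A_{m,N},V)\cong\k[[t]]$. The gap is in the combinatorial skeleton that you explicitly defer, and part of it is not just unfinished but set up in a way that cannot deliver the statement. You parametrize by two-sided extensions $V_{n,s}=M[\underline{D}_{\cdot,n}\,S\,\underline{C}_{\cdot,s}]$ and claim every module of $\mathfrak{C}\cup\Omega\mathfrak{C}$ lies in the $\Omega^2$-orbit of some $V_{n,s}$. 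But every $V_{n,s}$ lies in $\mathfrak{C}$, and by Remark \ref{rem0} $\Omega^2=\tau_{\A_{m,N}}$ preserves components, so your representatives never meet $\Omega\mathfrak{C}$; since these components are in general not $\Omega$-stable, the modules in $\Omega\mathfrak{C}$ are simply outside your list. The paper instead works with the one-sided extensions $M[\underline{D}_{\mathbf{t}(S),n}S]$ and $M[S\,\underline{C}_{\mathbf{s}(S),n}]$, covers $\mathfrak{C}\cup\Omega\mathfrak{C}$ by their $\Omega$-orbits, and transports stable endomorphism rings and universal deformation rings along $\Omega$ (stable self-equivalence plus Theorem \ref{thm3}(iii)). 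Relatedly, your count in (ii) does not follow from your own setup: since $\Omega^{-2}$ adds one hook on each side, $V_{n,s}$ and $V_{n+1,s+1}$ are $\Omega^2$-translates, so the square $0\le n,s\le\kappa_m-1$ collapses to $2\kappa_m-1$ orbits inside $\mathfrak{C}$ alone, not $2\kappa_m$ in $\mathfrak{C}\cup\Omega\mathfrak{C}$; the sentence ``collecting these representatives ... should produce exactly $2\kappa_m$ orbits'' is asserted, not derived. Moreover the claim that $\SEnd_{\A_{m,N}}(V_{n,s})\cong\k$ precisely for $0\le n,s\le\kappa_m-1$ is unverified and goes beyond what the paper checks (the paper verifies only the one-sided cases $n=0$ or $s=0$ up to $\kappa_m-1$, and exhibits extra non-projective canonical endomorphisms beyond $\kappa_m$); when both $n,s\ge 1$ one must additionally rule out canonical endomorphisms matching the two hook tails against each other, which you have not addressed.

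Two smaller points. Your characterization of the non-$\Theta$ strings as ``precisely the odd-length substrings of $Z_i$ of length at least $3$'' omits the odd-length strings $W_i^{(n)}$ for $m$ even (these have length $m+n>m$, so are not substrings of $Z_i$), although the parity observation itself is the right dichotomy; and your indices are off: for the concatenations to be defined one needs the $\underline{D}$-part to start at $\mathbf{t}(S)$ and the $\underline{C}$-part to end at $\mathbf{s}(S)$, as in the paper, not $\mathbf{s}(S)+1$ and $\mathbf{t}(S)$. Finally, the identification of exactly which of the surviving orbits carry $\Ext^1\cong\k$ (only the extremal index $\kappa_m-1$, and only for the relevant parity of $m$), which is where $\k$ versus $\k[[t]]$ gets decided, is precisely the computation the paper carries out and your proposal leaves as ``the main obstacle''; as written, the proposal therefore establishes neither the orbit count nor the dichotomy of deformation rings in (ii), while part (i) is correct in substance once the $\Omega\mathfrak{C}$ half is handled by $\Omega$-invariance rather than by your two-sided list.
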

\begin{proof}
First assume that $S$ is equivalent to $\Theta_{i,l}$ for some $i\in \Z/m$ and $l\geq 1$. Note that every $\A_{m,N}$-module in $\mathfrak{C}\cup\Omega\mathfrak{C} $ lies in the $\Omega$-orbit of either $M[S]$, or $M[S\underline{C}_{i,n}]$, or $M[\underline{D}_{\mathbf{t}(S),n}S]$ for some $n\geq 1$.  By using the shape of the indecomposable projective $\A_{m,N}$-modules as in Figure \ref{projec} together with the description of the canonical endomorphisms in \S\ref{appendix3}, it is straightforward to show that $\SEnd_{\A_{m,N}}(M[S\underline{C}_{i,n}])\cong \k\cong \SEnd_{\A_{m,N}}(M[\underline{D}_{\mathbf{t}(S),n}S])$. Since $\A_{m,N}$ is self-injective, it follows that $\Omega$ induces a self-equivalence of the stable module category of $\A_{m,N}$. Then $\SEnd_{\A_{m,N}}(\Omega M[S\underline{C}_{i,n}])\cong \k\cong \SEnd_{\A_{m,N}}(\Omega M[\underline{D}_{\mathbf{t}(S),n}S])$, which shows that every $\A_{m,N}$-module in $\mathfrak{C}\cup\Omega\mathfrak{C}$ has stable endomorphism ring isomorphic to $\k$. Assume next that 
\begin{equation}\label{descS}
S\sim \cdots \bar{a}_{i+1}^{-1}a_i.
\end{equation}
It follows that $\Omega M[S]\cong M[\Delta S]$, where 
\begin{equation}
\Delta S\sim \cdots (a_{i+1}\bar{a}_{i+1})^{N-1}a_{i+1}(a_i\bar{a}_i)^{1-N}\bar{a}_i^{-1}(a_{i-1}\bar{a}_{i-1})^{N-1}a_{i-1}.
\end{equation}
Again by using the shapes of the indecomposable projective $\A_{m,N}$-modules as in Figure \ref{projec} together with the description of the canonical homomorphisms as in \S\ref{appendix3}, it follows that 
\begin{align*}
\Ext_{\A_{m,N}}^1(M[S], M[S])&=\SHom_{\A_{m,N}}(\Omega M[S], M[S])\\
&=\SHom_{\A_{m,N}}(M[\Delta S], M[S]) =0, 
\end{align*}
which implies that $R(\A_{m,N}, M[S])=\k$.
Similarly, since for all $n\geq 1$, 
\begin{align*}
\Ext_{\A_{m,N}}^1(M[S\underline{C}_{i,n}], M[S\underline{C}_{i,n}])=0=\Ext_{\A_{m,N}}^1(M[\underline{D}_{\mathbf{t}(S),n}S],M[\underline{D}_{\mathbf{t}(S),n}S]), 
\end{align*} 
we obtain that $R(\A_{m,N},M[S\underline{C}_{i,n}])=\k= R(\A_{m,N}, M[\underline{D}_{\mathbf{t}(S),n}S])$. Note that we can adjust the above arguments for the case 
\begin{equation}\label{descS}
S\sim \cdots a_{i+1}\bar{a}_i^{-1}.
\end{equation}
Since the isomorphism class of universal deformation rings is invariant under $\Omega$ by Theorem \ref{thm3}(iii), we obtain Proposition \ref{prop7}(i).

To prove Proposition \ref{prop7}(ii), assume that $\mathbf{s}(S)=i$ and that $S$ is not equivalent to $\Theta_{i,l}$ for all $l\geq 1$. Let $n_1\geq 0$ maximal such that the string $\Theta_{i, n_1}$ is string equivalent to a substring of $S$. Then there exists a string $\zeta$ of length $1$ with $\mathbf{s}(\zeta)= \mathbf{t}(\Theta_{i, n_1})$ such that $S\sim \zeta \Theta_{i, n_1}$. Let $\kappa_m$ be as in (\ref{kappam}). By using the shape of the indecomposable projective $\A_{m,N}$-modules as in 
Figure \ref{projec} together with the description of the canonical endomorphisms in \S\ref{appendix3}, it is straightforward to show that for all $0\leq j< \kappa_m$, $\SEnd_{\A_{m,N}}(M[\underline{D}_{\mathbf{t}(S),j}S])\cong \k\cong \SEnd_{\A_{m,N}}(M[S\underline{C}_{i,j}])$, and that if $j\geq \kappa_m$,  $M[\underline{D}_{\mathbf{t}(S),j}S]$ (resp. $M[S\underline{C}_{\mathbf{s}(S),j}]$) has a canonical endomorphism ring, which does not factor through a projective $\A_{m,N}$-module.  Thus there are exactly $2\kappa_m$ $\Omega^2$-orbits in $\mathfrak{C}\cup \Omega\mathfrak{C}$ of $\A_{m,N}$-modules whose stable endomorphism ring is isomorphic to $\k$.

Let $0\leq j <\kappa_m-1$  be fixed but arbitrary, and  let $S'=\underline{D}_{\mathbf{t}(S),j}S$ (resp. $S''=S\underline{C}_{i,j}$). As before, it is straightforward to show by using the shape of the indecomposable projective $\A_{m,N}$-modules as in Figure \ref{projec} together with the description of the canonical endomorphisms in \S\ref{appendix3} that  $\Ext_{\A_{m,N}}^1(M[S'],M[S'])=0=\Ext_{\A_{m,N}}^1(M[S''],M[S''])$. This implies $R(\A_{m,N},M[S'])\cong \k \cong R(\A_{m,N},M[S''])$. 

Next let $S'=\underline{D}_{\mathbf{t}(S),\kappa_m-1}S$ (resp. $S''=S\underline{C}_{i,\kappa_m-1}$). Then 
\begin{equation*}
\Ext_{\A_{m,N}}^1(M[S'],M[S'])\cong \k\cong \Ext_{\A_{m,N}}^1(M[S''],M[S'']),
\end{equation*}
which implies that $R(\A_{m,N},M[S'])$ and  $R(\A_2,M[S''])$ are quotients of $\k[[t]]$. Let $T'_0= S'$ (resp. $T''_0=S''$) and for all  $l\geq 1$, let $T'_l=T'_{l-1}\gamma' S'$ (resp. $T''_l=T''_{l-1}\gamma'' S'$) where $\gamma'$ (resp. $\gamma''$) is either an arrow or the formal inverse of an arrow joining $\mathbf{t}(S')$ with $\mathbf{s}(S')-1$ (resp. $\mathbf{t}(S'')$ with $\mathbf{s}(S'')-1$) $\mod m$.  By using similar arguments as  in the proof of Claim \ref{claim1}, for each $l\geq 1$, we get lifts $(M[T'_l], \varphi'_l)$ (resp.$(M[T''_l], \varphi''_l)$) of $M[S']$ (resp. $M[S'']$) over $\k[[t]]/(t^{l+1})$, where $t$ acts on $x\in M[T'_l]$ (resp. $x\in M[T''_l]$) as $t\cdot x =\delta'_l(x)$ (resp. $t\cdot x =\delta''_l(x)$), and where $\delta'_l$ (resp. $\delta''$) is the non-trivial canonical endomorphism of $M[T'_l]$ (resp. $M[T''_l]$) that factors through $M[T_{l-1}]$ (resp. $M[T'_l]$).
By using similar arguments to those in the proof of Claim \ref{claim2}, we obtain that $R(\A_2, M[S'])\cong \k[[t]]\cong R(\A_2, M[S''])$.
This finishes the proof of Proposition \ref{prop7}.
\end{proof}

\subsection{Tubes of $\Gamma_s(\A_{m,N})$ containing string $\A_{m,N}$-modules.}
Using the description of the indecomposable projective $\A_{m,N}$-module as in Figure \ref{projec} and the definition of the string $\A_{m,N}$-modules in \S\ref{ape1}, it follows that if $m$ is odd, then there are exactly two $m$-tubes in $\Gamma_s(\A_{m,N})$, namely, one containing $M[(a_0\bar{a}_0)^{N-1}a_0]$  and another one containing $\Omega M[(a_0\bar{a}_0)^{N-1}a_0] = M[(\bar{a}_{m-1}a_{m-1})^{N-1}\bar{a}_{m-1}]$. 

On the other hand if $m$ is even, then there are exactly four $\frac{m}{2}$-tubes in $\Gamma_s(\A_{m,N})$, namely, one containing the string $\A_{m,N}$-module $M[(a_0\bar{a}_0)^{N-1}a_0]$, other containing $\Omega M[(a_0\bar{a}_0)^{N-1}a_0] = M[(\bar{a}_1a_1)^{N-1}\bar{a}_1]$, other containing the string $\A_{m,N}$-module $M[(\bar{a}_{m-1}a_{m-1})^{N-1}\bar{a}_{m-1}]$, and another one containing $\Omega M[(\bar{a}_{m-1}a_{m-1})^{N-1}\bar{a}_{m-1}]= M[(\bar{a}_0a_0)^{N-1}\bar{a}_0]$.  

As noted already in Remark \ref{rem42}, if $N=1$, then for all $i\in \Z/m$, the component of $\Gamma_s(\A_{m,1})$ containing the string $\A_{m,1}$-module $M[a_i]$ (resp. $M[\bar{a}_i]$) is a tube. 

\begin{proposition}\label{prop6}
Let $\mathfrak{T}$ be a tube of $\Gamma_s(\A_{m,N})$ containing only string $\A_{m,N}$-modules, and let $\kappa_m$ be as in (\ref{kappam}). Then there are exactly $\kappa_m$ $\Omega^2$-orbits of $\A_{m,N}$-modules in $\mathfrak{T}\cup\Omega\mathfrak{T}$ whose stable endomorphism ring is isomorphic to $\k$. Their universal deformation rings are isomorphic either to $\k$ or to $\k[[t]]$.
\end{proposition}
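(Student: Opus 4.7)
The plan is to parameterize the modules in $\mathfrak{T}\cup\Om\mathfrak{T}$ using the hook/co-hook procedure of \S\ref{ape2}, identify those with stable endomorphism ring isomorphic to $\k$ by counting factorizations of canonical homomorphisms, and finally compute the universal deformation rings by an argument parallel to Claims \ref{claim1} and \ref{claim2}.

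First, using the description of the indecomposable projective $\A_{m,N}$-modules in Figure \ref{projec} together with the mouth modules listed just before the statement (namely $M[a_i]$ and $M[\bar{a}_i]$ when $N=1$, and $M[(a_0\bar a_0)^{N-1}a_0]$ together with its $\Om$-translates when $N\geq 2$), I would write every module in the tube in the form $M[\underline{D}_{\mathbf{t}(S),j}\,S]$ or $M[S\,\underline{C}_{\mathbf{s}(S),j}]$ for a mouth string $S$ and an integer $j\geq 0$, where $\underline{C}_{i,j}$ and $\underline{D}_{i,j}$ are as in (\ref{hooks}) and (\ref{cohooks}). Since $\A_{m,N}$ is symmetric and $\tau_{\A_{m,N}}=\Om^2$ by Remark \ref{rem0}, the $\Om^2$-orbits in $\mathfrak{T}\cup\Om\mathfrak{T}$ are in bijection with the rays of $\mathfrak{T}$ and $\Om\mathfrak{T}$; their count is $2m$ if $m$ is odd and $m$ if $m$ is even.

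Next I would determine the stable endomorphism rings. Using the description of canonical homomorphisms between string modules in \S\ref{appendix3}, a non-trivial factorization appears precisely when the string wraps more than half-way around $Q$, because then one of the canonical maps factors through a simple or length-one string module that is not projective. A direct case analysis, exactly analogous to the one in the proof of Proposition \ref{prop5} and Proposition \ref{prop7}(ii), shows that within each of the two $\Om$-related families the modules with $\SEnd_{\A_{m,N}}\cong \k$ lie in $\kappa_m$ of the $\Om^2$-orbits: namely those indexed by $0\leq j\leq \kappa_m-1$ in the relevant hook/co-hook parameterization. The $\Om$-invariance of $\SEnd_{\A_{m,N}}\cong\k$ from Theorem \ref{thm3}(iii) then pairs the rays of $\mathfrak{T}$ with those of $\Om\mathfrak{T}$, so that $\mathfrak{T}\cup\Om\mathfrak{T}$ contributes exactly $\kappa_m$ $\Om^2$-orbits of such modules.

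For each such module $V$, I would compute $\Ext_{\A_{m,N}}^1(V,V)=\SHom_{\A_{m,N}}(\Om V,V)$ by inspecting the canonical morphisms from $\Om V$ to $V$ modulo those factoring through a projective, using Figure \ref{projec} and \S\ref{appendix3}. When $j<\kappa_m-1$ this space is $0$, giving $R(\A_{m,N},V)\cong \k$ by (\ref{hoch}). When $j=\kappa_m-1$ one obtains $\Ext_{\A_{m,N}}^1(V,V)\cong \k$, so $R(\A_{m,N},V)$ is a quotient of $\k[[t]]$; I would then build a lift of $V$ over $\k[[t]]$ by taking, as in Claim \ref{claim2}, an increasing chain of strings $T_l=T_{l-1}\gamma V$ (with $\gamma$ a suitable arrow or formal inverse bridging $\mathbf{t}(V)$ and $\mathbf{s}(V)-1$), building lifts $(M[T_l],\varphi_l)$ over $\k[[t]]/(t^{l+1})$ via the canonical nilpotent endomorphism, and passing to the inverse limit $N_0=\invlim M[T_l]$. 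Freeness over $\k[[t]]$ follows from lifting a $\k$-basis of $N_0/tN_0$, and the resulting surjection $R(\A_{m,N},V)\surjection \k[[t]]$ must be an isomorphism since $R(\A_{m,N},V)$ is itself a quotient of $\k[[t]]$.

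The main obstacle I anticipate is pinning down the precise cutoff $j=\kappa_m-1$ for the appearance of a non-projectively-factoring canonical endomorphism; because the tubes are periodic, one must verify that the wrap-around of strings $\underline{D}_{\mathbf{t}(S),j}S$ around the cycle $Q$ does not produce extra factorizations earlier than half-way around, and that the resulting string does not accidentally become string equivalent to a substring of a projective indecomposable. This demands careful bookkeeping of the signs of the arrows $a_i$ versus $\bar a_i$ appearing in $\underline{c}_i$ and $\underline{d}_i$ and a case split based on the parity of $m$; beyond this combinatorial point, both the $\Ext^1$ computation and the $\k[[t]]$-lift construction are routine adaptations of the arguments already presented in Propositions \ref{prop4}, \ref{prop5} and \ref{prop7}.
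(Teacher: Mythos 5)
Your overall strategy is the paper's own: reduce to the tube on whose mouth the module $M[(a_0\bar a_0)^{N-1}a_0]$ sits (using Theorem \ref{thm3} and Remark \ref{rem0}), parameterize the modules along the ray from the mouth by $V_n=M[\underline{D}_{1,n}(a_0\bar a_0)^{N-1}a_0]$, detect $\SEnd_{\A_{m,N}}(V_j)\cong\k$ exactly for $0\leq j\leq\kappa_m-1$ via the canonical homomorphisms of \S\ref{appendix3}, compute $\Ext^1_{\A_{m,N}}(V_j,V_j)$ to get $R\cong\k$ when it vanishes, and build a $\k[[t]]$-lift by the inverse-limit construction of Claim \ref{claim2} when it does not. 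However, there is a genuine error at the boundary case $j=\kappa_m-1$: you assert unconditionally that $\Ext^1_{\A_{m,N}}(V_{\kappa_m-1},V_{\kappa_m-1})\cong\k$ and then conclude $R(\A_{m,N},V_{\kappa_m-1})\cong\k[[t]]$. In the paper this holds only when $m$ is even; when $m$ is odd one finds $\SHom_{\A_{m,N}}(\Omega V_{\kappa_m-1},V_{\kappa_m-1})=0$, hence $R(\A_{m,N},V_{\kappa_m-1})\cong\k$. Your proposed chain $T_l=T_{l-1}\gamma V$ with $\gamma$ bridging $\mathbf{t}(V)$ and $\mathbf{s}(V)-1$ cannot be carried out for $m$ odd: since $\Ext^1=0$ there is no non-trivial lift over $\k[[t]]/(t^2)$ to start the tower, and indeed the endpoint bookkeeping that makes the concatenation a legitimate string (as in Claim \ref{claim2}, where $\mathbf{s}(W_i)=i$ and $\mathbf{t}(W_i)=i-1$) only works out in the even case. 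You flag the parity issue at the end as an anticipated difficulty, but the argument as written gives the wrong deformation ring for odd $m$; the statement of the proposition survives only because it allows both $\k$ and $\k[[t]]$.

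A smaller point of bookkeeping: the $\Omega^2$-orbits in $\mathfrak{T}\cup\Omega\mathfrak{T}$ are not in bijection with the rays of the two tubes (an $\Omega^2=\tau$-orbit consists of the modules of a fixed quasi-length in one tube, so there are infinitely many such orbits in all); the correct statement, as in the paper, is that every module of $\mathfrak{T}\cup\Omega\mathfrak{T}$ lies in the $\Omega$-orbit of some $V_n$, and the ones with stable endomorphism ring $\k$ are exactly those with $0\leq n\leq\kappa_m-1$, which yields the count $\kappa_m$ claimed in the proposition. Your ray count ``$2m$ if $m$ is odd and $m$ if $m$ is even'' plays no role in the final tally and should be removed or corrected; likewise, the non-projectively-factoring endomorphism appearing for $j\geq\kappa_m$ factors through a simple module or through a module $M[E]$ with $E$ a \emph{maximal} directed string, not a length-one string.
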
   
\begin{proof}
By using Theorem \ref{thm3} together with Remark \ref{rem0}, we can prove Proposition \ref{prop6} by just considering the tube $\mathfrak{T}$ containing the string $\A_{m,N}$-module $V_0=M[(a_0\bar{a}_0)^{N-1}a_0]$ which lies on the boundary of $\mathfrak{T}$, for $(a_0\bar{a}_0)^{N-1}a_0$ is a maximal string for $\A_{m,N}$. For all $n\geq 1$, let $V_n=M[\underline{D}_{1,n}(a_0\bar{a}_0)^{N-1}a_0]$, where $\underline{D}_{1,n}$ is as in (\ref{cohooks}). Note that every $\A_{m,N}$-module in $\mathfrak{T}\cup\Omega\mathfrak{T} $ lies in the $\Omega$-orbit of $V_n$ for some $n\geq 0$. If $0\leq j\leq \kappa_m-1$, then by using the description of the indecomposable $\A_{m,N}$-modules together with that of the canonical morphisms in \S\ref{appendix3}, it is straightforward to show that $\SEnd_{\A_{m,N}}(V_j)\cong \k$. If $j\geq \kappa_m$, then the string $\A_{m,N}$-module $V_j$ has an a canonical endomorphism as in (\ref{canhom}) that factors through either a simple $\A_{m,N}$-module or a string $\A_{m,N}$-module corresponding to a maximal string for $\A_{m,N}$, and which does not factor through a projective $\A_{m,N}$-module. In this situation we have that $\SEnd_{\A_{m,N}}(V_j)\not\cong \k$, and therefore there are exactly $\kappa_m$ $\Omega^2$-orbits of $\A_{m,N}$-modules in $\mathfrak{T}\cup \Omega \mathfrak{T}$ whose stable endomorphism ring is isomorphic to $\k$. If $0\leq j< \kappa_m-1$ we have $\Ext_{\A_{m,N}}^1(V_j,V_j) = \SHom_{\A_{m,N}}(\Omega V_j,V_j)=0$, which implies $R(\A_{m,N}, V_j)\cong \k$. If $m$ is odd, then we obtain that 
\begin{equation*}
\Ext_{\A_{m,N}}^1(V_{\kappa_m-1},V_{\kappa_m-1}) = \SHom_{\A_{m,N}}(\Omega V_{\kappa_m-1},V_{\kappa_m-1})=0, 
\end{equation*}  
which implies $R(\A_{m,N}, V_{\kappa_m-1})\cong \k$. Assume that $m$ is even. Then there is a canonical morphism in $\Hom_{\A_{m,N}}(\Omega V_{\kappa_m-1},V_{\kappa_m-1})$ as in (\ref{canhom}) that factors through a simple $\A_{m,N}$-module, and which  does not factor through a projective $\A_{m,N}$-module. This shows that 
\begin{equation*}
\Ext_{\A_{m,N}}^1( V_{\kappa_m-1},V_{\kappa_m-1})\cong \k, 
\end{equation*}
which implies that $R(\A_{m,N}, V_{\kappa_m-1})$ is a quotient of $\k[[t]]$. By using analogous arguments to those in the proof of Claim \ref{claim2}, we obtain that $R(\A_{m,N}, V_{\kappa_m-1})\cong \k[[t]]$. This finishes the proof of Proposition \ref{prop6}.
\end{proof}

\begin{remark}
If $m=3$ and $N=1$, then Proposition \ref{prop6} recovers the results in \cite[Prop. 3.2(iii)]{bleher9} for the algebra $\A_{3,1}=D(3\mathcal{K})^{1,1,1}$.
\end{remark}


\subsection*{Acknowledgements}  
This article was developed during the visit of the fourth author to the Instituto de Matem\'aticas at the Universidad  de Antioquia in Medell{\'\i}n, Colombia during Summer 2016. The fourth author would like to express his gratitude to the other authors, faculty members, staff and students at the Instituto of Matem\'aticas as well as the other people related to this work at the Universidad de Antioquia for their hospitality and support during his visit. All the authors would like to express their gratitude to Professor Rachael Taillefer for providing some of the information stated in Remark \ref{rem0}.

\appendix

\section{Some Remarks about the Representation Theory of $\A_{m,N}$}\label{appendix}

Let $\A_{m,N}$ be as in Figure \ref{fig1}, where $m\geq 3$ and $N\geq 1$. We identify the vertices of $Q$ with elements of $\mathbb{Z}/m$ (the cyclic group of $m$ elements). Since $\A_{m,N}$ is a special biserial algebra (as introduced in \cite{wald}), all the non-projective indecomposable $\A_{m,N}$-modules can be described combinatorially by using so-called string and bands for $\A_{m,N}$. The corresponding indecomposable $\A_{m,N}$-modules are called string and band modules. In the following, we describe these string $\A_{m,N}$-modules, the components of the stable Auslander-Reiten quiver $\Gamma_s(\A_{m,N})$ containing them as determined in \cite{buri}, and the morphisms between them as determined in \cite{krause}.

\subsection{String modules for $\A_{m,N}$}\label{ape1}

For all $i\in \Z/m$ and for each arrow $a_i, \bar{a}_i$ of $Q$, we define a formal inverse by $a_i^{-1}$, $\bar{a}_i^{-1}$, respectively, and we let $\mathbf{s}(a_i)=\mathbf{t}(\bar{a}_i)=\mathbf{t}(a_i^{-1})=\mathbf{s}(\bar{a}_i^{-1})=i$ and $\mathbf{t}(a_i)=\mathbf{s}(\bar{a}_i)=\mathbf{s}(a_i^{-1})=\mathbf{t}(\bar{a}_i^{-1})=i+1$, i.e. $\mathbf{s}$ and $\mathbf{t}$ denotes the vertex where an arrow or formal inverse of an arrow starts and ends, respectively. By a {\it word} of length $n\geq 1$ we mean a sequence $w_1\cdots w_n$, where the $w_j$ is either an arrow or a formal inverse of an arrow, 
and where $\mathbf{s}(w_j)=\mathbf{t}(w_{j+1})$ for  $1\leq j \leq n-1$. We define $(w_1\cdots w_n)^{-1}={w_n}^{-1}\cdots {w_1}^{-1}$, $\mathbf{s}(w_1\cdots w_n)=\mathbf{s}
(w_n)$ and $\mathbf{t}(w_1\cdots w_n)=\mathbf{t}(w_1)$.  If $i\in \Z/m$ is a vertex of $Q$, we define an empty word $\1_i$ of length zero with $\mathbf{t}(\1_i)=i=\mathbf{s}
(\1_i)$ and $(\1_i)^{-1}=\1_i$. 

Note that in this article, we write words (consequently paths) from right to left. Denote by $\mathcal{W}$ the set of all words and let 
\[J=\{a_{i+1}a_i,\bar{a}_{i-1}\bar{a}_i, (\bar{a}_ia_i)^N,(a_{i-1}\bar{a}_{i-1})^N : i\in \Z/m\}.\] 
Let $\sim$ be the equivalence relation on $\mathcal{W}$ defined by  $w\sim w'$ if and only if $w=w'$ or $w^{-1}
=w'$. A \textit{string} is a representative $C$ of an equivalence class under the relation $\sim$,  where either  $C=\1_i$ for some vertex $i$ of $Q$, or $C=w_1\cdots w_n$ with $n
\geq 1$ and $w_j\not=w_{j+1}^{-1}$ for $1\leq j\leq  n-1$ and no sub-word of $C$ or its formal inverse belong to $J$. If $C$ is a string such that $\mathbf{s}(C)=\mathbf{t}(C)$, then we let 
$C^0=\1_{\mathbf{t}(C)}$. If $C=w_1\cdots w_n$ and $D=v_1\cdots v_m$ are strings of 
length $n,m\geq 1$, respectively, we say that the composition of $C$ and $D$ is defined provided that $w_1\cdots w_nv_1\cdots v_m$ is a string and write $CD=w_1\cdots 
w_nv_1\cdots v_m$. Observe that  $C\1_{\mathbf{s}(C)}\sim C$ and $\1_{\mathbf{t}(C)}C\sim C$. Moreover, if $C=w_1\cdots w_n$ is a string of length $n\geq 1$, then $C\sim w_1\cdots w_j\1_{\mathbf{t}(w_{j+1})}w_{j+1}\cdots w_n$ for 
all $1\leq j\leq n-1$. If $C=w_n\cdots w_1$ is a string of length $n\geq 1$, then there exists an indecomposable $\A_{m,N}$-module $M[C]$, called the {\it string 
module} corresponding to the string representative $C$, which can be  described as follows. There is an ordered $\k$-basis $\{z_0,z_1,\ldots,z_n\}$ of $M[C]$ such that the action of $
\A_{m,N}$ 
on $M[C]$ is given by the following representation $\varphi_C:\A_{m,N}\to \Mat(n+1,\k)$. Let $\text{\bf v}(j)=\mathbf{t}(w_{j+1})$ for $0\leq j\leq n-1$ and $\text{\bf v}(n)=\mathbf{s}(w_n)$. Then for each vertex $i\in \Z/m$, for each arrow $\gamma\in \{a_i, \bar{a}_i : i\in \Z/m\}$ in $Q$, and for all $0\leq j\leq n$ define
\begin{align*}
\varphi_C(i)(z_j)=\begin{cases}
z_j, &\text{ if $\text{\bf v}(j)=i$,}\\
0, &\text{ otherwise,}
\end{cases}
&&
\text{ and }
&&
\varphi_C(\gamma)(z_j)=\begin{cases}
z_{j-1}, &\text{ if $w_j=\gamma$,}\\
z_{j+1}, &\text{ if $w_{j+1}=\gamma^{-1}$,}\\
0, &\text{ otherwise.}
\end{cases}
\end{align*}

We call $\varphi_C$ the {\it canonical representation} and $\{z_0,z_1,\ldots,z_n\}$ a {\it canonical $\k$-basis} for $M[C]$ relative to the string representative $C$. Note that $M[C]
\cong M[C^{-1}]$.  If $C=\1_i$ with $i\in \Z/m$, 
then $M[C]=M[\1_i]$ is the simple $\A_{m,N}$-module corresponding to the vertex $i$.

\subsection{The components of the stable Auslander-Reiten quiver of $\A_{m,N}$ containg string $\A_{m,N}$-modules}\label{ape2}
In the following, we recall the description of the irreducible morphisms between string $\A_{m,N}$-modules provided in \cite{buri}. 

Assume $C=w_1w_2\cdots w_n$ with $n\geq 1$ is a string. We say that $C$ 
is \textit{directed} if all $w_j$ are arrows 
and  we say that $C$ is a {\it maximal directed string} if $C$ is directed and if for every arrow $\gamma$ in $Q$, $\gamma C\in J$.
Let $\mathcal{M}$ be the set of all maximal directed strings, i.e.,

\[\mathcal{M}=\{(a_i\bar{a}_i)^{N-1}a_i, (\bar{a}_{i-1}a_{i-1})^{N-1}\bar{a}_{i-1} : i\in \Z/m\}.\]

Let $C$ be a string for $\A_{m,N}$. We say that $C$ {\it starts on a peak} (resp., {\it starts in a deep}) provided that there is no arrow $\varphi$ in $Q$ such that $C\varphi$ (resp.,  $C
\varphi^{-1}$) is a string;  we also say that $C$ {\it ends on a peak} (resp., {\it ends in a deep}) provided that there is no arrow $\gamma$ in $Q$ such that  $\gamma^{-1}C$ (resp., $\gamma C$) is a string. If $C$ is a string 
not starting on a peak (resp., not starting in a deep), say $C\varphi$ (resp., $C\varphi^{-1}$) is a string for some arrow $\varphi$, then there is a unique directed string $D\in 
\mathcal{M}$ such that $C_h=C\varphi D^{-1}$  (resp., $C_c=C\varphi^{-1}D$) is a string. We say $C_h$ (resp., $C_c$) is obtained from $C$ by adding a {\it hook} 
(resp., a {\it co-hook}) {\it on the right side}.
Dually,  if $C$ is a string not ending on a peak (resp., not ending in a deep), say $\gamma^{-1} C$ (resp., $\gamma C$) is a string for some arrow $\gamma$ in $Q$, 
then there is a unique directed string $E\in \mathcal{M}$ such that $_hC=E\gamma^{-1} C$  (resp., $_cC=E^{-1}\gamma D$) is a string. We say $_hC$ (resp., $_cC$) 
is obtained from $C$ by adding a {\it hook} (resp., a {\it co-hook}) {\it on the left side}. By \cite{buri}, all irreducible morphisms between string modules are either canonical 
injections $M[C]\to M[C_h]$, $M[C]\to M[{_hC}]$, or canonical projections $M[C_c]\to M[C]$, $M[{_cC}]\to M[C]$. 

Let $S$ be a substring of $C$. We say that $S$ {\it starts on a peak} (resp., {\it starts in a deep}) in $C$ provided that there is no arrow $\varphi$ in $Q$ such that $S\varphi$ (resp.,  $S\varphi^{-1}$) is a substring of $C$;  we also say that $S$ {\it ends on a peak} (resp., {\it ends in a deep}) in $C$ provided that there is no arrow $\gamma$ in $Q$ such that  $\gamma^{-1}S$ (resp., $\gamma S$) is a substring in $C$.

We say that a string $\A_{m,N}$-module $M[C]$ is of {\it minimal string length} if the string representative $C$ is not in $\mathcal{M}$ and it cannot be obtained by adding a hook or a co-hook to a proper substring. If $M[C]$ is a string module of minimal string length, then $M[C]$ belongs to a component $\mathfrak{C}$ of $\Gamma_s(\A_{m,N})$ of type $\mathbb{ZA}_\infty^
\infty$, i.e., near $M[C]$ the component $\mathfrak{C}$ looks as in Figure \ref{component}.

\begin{figure}[ht]
$
\begindc{\commdiag}[100]
\obj(-12,10)[a]{$\vdots$}
\obj(-4,10)[1]{$\vdots$}
\obj(4,10)[2]{$\vdots$}
\obj(12,10)[b]{$\vdots$}
\obj(-12,7)[3]{$\cdots$}
\obj(-8,7)[4]{$M[C_{cc}]$}
\obj(0,7)[5]{$M[_hC_{c}]$}
\obj(8,7)[6]{$M[_{hh}C]$}
\obj(12,7)[7]{$\cdots$}

\obj(-12,4)[8]{$\cdots$}
\obj(-4,4)[9]{$M[C_{c}]$}
\obj(4,4)[10]{$M[_hC]$}
\obj(12,4)[11]{$\cdots$}

\obj(-12,1)[12]{$\cdots$}
\obj(-8,1)[13]{$M[_cC_{c}]$}

\obj(0,1)[14]{$M[C]$}
\obj(8,1)[15]{$M[_hC_h]$}
\obj(12,1)[16]{$\cdots$}

\obj(-12,-2)[17]{$\cdots$}
\obj(-4,-2)[18]{$M[_cC]$}
\obj(4,-2)[19]{$M[C_h]$}
\obj(12,-2)[20]{$\cdots$}

\obj(-12,-5)[21]{$\cdots$}
\obj(-8,-5)[22]{$M[_{cc}C]$}
\obj(0,-5)[23]{$M[_cC_h]$}
\obj(8,-5)[24]{$M[C_{hh}]$}
\obj(12,-5)[25]{$\cdots$}

\obj(-12,-8)[26]{$\vdots$}
\obj(-4,-8)[27]{$\vdots$}
\obj(4,-8)[28]{$\vdots$}
\obj(12,-8)[29]{$\vdots$}

\mor{a}{4}{}
\mor{4}{1}{}
\mor{1}{5}{}
\mor{5}{2}{}
\mor{2}{6}{}
\mor{6}{b}{}
\mor{8}{4}{}
\mor{4}{9}{}
\mor{9}{5}{}
\mor{5}{10}{}
\mor{10}{6}{}
\mor{6}{11}{}
\mor{8}{13}{}
\mor{13}{9}{}
\mor{9}{14}{}
\mor{14}{10}{}
\mor{10}{15}{}
\mor{15}{11}{}
\mor{17}{13}{}
\mor{13}{18}{}
\mor{18}{14}{}
\mor{14}{19}{}
\mor{19}{15}{}
\mor{15}{20}{}
\mor{17}{22}{}
\mor{22}{18}{}
\mor{18}{23}{}
\mor{23}{19}{}
\mor{19}{24}{}
\mor{24}{20}{}
\mor{26}{22}{}
\mor{22}{27}{}
\mor{27}{23}{}
\mor{23}{28}{}
\mor{28}{24}{}
\mor{24}{29}{}
\enddc
$
\caption{The stable Auslander-Reiten component near $M[C]$.}\label{component}
\end{figure}

\subsection{Homomorphisms between string $\A_{m,N}$-modules}\label{appendix3}
Let $S$ and $T$ be strings for $\A_{m,N}$ and let $M[S]$ and $M[T]$ their respective string $\A_{m,N}$-modules with respective canonical $\k$-basis $\{x_u\}_{u=0}^{l_1}$ and $\{y_v\}_{v=0}^{l_2}$. Suppose that $C$ is a substring of both $S$ and $T$ such that the following conditions (i) and (ii) are satisfied.
\begin{enumerate}
\item $S\sim S'CS''$, with ($S'$ of length zero or $S'=\hat{S}'\xi_1$) and ($S''$ of length zero or $S''=\xi_2^{-1}\hat{S}''$), where $S'$, $\hat{S}'$, $S''$, $\hat{S}''$ are strings and $\xi_1$, $\xi_2$ are arrows in $Q$; and 
\item $T\sim T'CT''$, with ($T'$ of length zero or $T'=\hat{T}'\zeta_1^{-1}$) and ($T''$ of length zero or $T''=\zeta_2\hat{T}''$), where $T'$, $\hat{T}'$, $T''$, $\hat{T}''$ are strings and $\zeta_1$, $\zeta_2$ are arrows in $Q$.
\end{enumerate}
Then by \cite{krause} there exists a composition of  $\A_{m,N}$-module homomorphisms
\begin{equation}\label{canhom}
\sigma_C:M[S]\surjection M[C]\injection M[T], 
\end{equation}
and which sends each element of $\{x_u\}_{0\leq u\leq l_1-1}$ either to zero or to an element of $\{y_v\}_{0\leq v\leq l_2-1}$, according to the relative position of $C$ in $S$ and $T$, respectively. Suppose e.g. that $S=w_1\cdots w_{l_1}$, $T=\tilde{w}_1\cdots \tilde{w}_{l_2}$ and $C=w_{j+1}w_{j+2}\cdots w_{j+l_3}=\tilde{w}_{k+l_3}^{-1}\tilde{w}_{k+l_3-1}^{-1}\cdots \tilde{w}_{k+1}^{-1}$, then   
\begin{align*}
\sigma_C(x_{j+t})=y_{k+l_3-t} &&\text{ for $0\leq t\leq l_3$},  &&\text{ and } &&\sigma_C(x_u)=0 &\text{ for all other $u$}. 
\end{align*}

We call $\sigma_C$ a {\it canonical homomorphism from $M[S]$ to $M[T]$}. Note that there may be several choices for $S'$, $S''$ (resp. $T'$, $T''$) in (i) (resp. (ii)). In other words, there may be several $\k$-linearly independent canonical homomorphisms factoring through $M[C]$. By \cite{krause},  every $\A_{m,N}$-module homomorphism $\sigma:M[S]\to M[T]$ can be written as a unique $\k$-linear combination of canonical homomorphisms which factor through string modules corresponding to strings $C$ satisfying (i) and (ii). In particular, if $S=T$, then every $\A_{m,N}$-module endomorphism of $M[S]$ can be written as a unique $\k$-linear combination of the identity homomorphism and of canonical endomorphisms which factor through string $\A_{m,N}$-modules $M[C]$ for suitable choices of $C$ satisfying (i) and (ii). 
We call $\sigma_C$ a {\it canonical homomorphism} from $M[S]$ to $M[T]$ that factors through $M[C]$. It follows from \cite{krause} that each $\A_{m,N}$-module homomorphism 
from $M[S]$ to $M[T]$ can be written uniquely as a $\k$-linear combination of canonical $\A_{m,N}$-module homomorphisms as in (\ref{canhom}). In particular, if $M[S]=M[T]$ 
then the canonical endomorphisms of $M[S]$ generate $\End_{\A_{m,N}}(M[S])$.

\bibliographystyle{amsplain}
\bibliography{UniversalDeformationSelfinjective(Rev)}   



\end{document}